\newtheorem{theorem}{Theorem}[section]
\newtheorem{proposition}{Proposition}
\theoremstyle{definition}
\newtheorem{definition}[theorem]{Definition}
\newtheorem{remark}{Remark}
\title[Non--Uniqueness of the QHD System] 
      {Non--Uniqueness of Weak Solutions of the Quantum--Hydrodynamic System}
\author[Peter Markowich and Jes\'us Sierra]{}
\subjclass{Primary: 58F15, 58F17; Secondary: 53C35.}
 \keywords{Quantum--Hydrodynamics, Brouwer's invariance of domain, De
Giorgi--Federer formula, trajectory-uniqueness, Schr\"{o}dinger
equation, nodal domain, non--uniqueness, weak solution.}
 \email{peter.markowich@kaust.edu.sa}
 \email{jesus.sierra@kaust.edu.sa}
\thanks{$^*$ Corresponding author: Peter Markowich}
\begin{document}
\maketitle

\centerline{\scshape Peter Markowich$^*$ and Jes\'us Sierra}
\medskip
{\footnotesize
 \centerline{King Abdullah University of Science and Technology,}
   \centerline{ Box 4700, Thuwal 23955-6900, Saudi Arabia}
} 

\bigskip

 \centerline{(Communicated by the associate editor name)}

\begin{abstract}
We investigate the non--uniqueness of weak solutions of the Quan\-tum--Hydrodynamic
system. This form of ill--posedness is related to the change of the
number of connected components of the support of the position density
(called nodal domains) of the weak solution throughout its time evolution.
We start by considering a scenario consisting of initial and final
time, showing that if there is a decrease in the number of connected
components, then we have non-uniqueness. This result relies on the
Brouwer invariance of domain theorem. Then we consider the case in
which the results involve a time interval and a full trajectory (position--current
densities). We introduce the concept of trajectory--uniqueness and
its characterization.
\end{abstract}

\section{Introduction}

In this paper, we study the non-uniqueness of (so-called Schr\"{o}\-dinger--generated)
bounded energy weak solutions of the Quantum Hydrodynamic (QHD) system
\cite{landau1979lehrbuch}
\[
\begin{array}{cc}
\begin{array}{c}
\varrho_{t}+\mathrm{div}J=0,\\
J_{t}+\mathrm{div}\left[\frac{J\otimes J}{\varrho}\right]+\varrho\nabla V-\frac{1}{2}\varrho\nabla\left(\frac{\Delta\sqrt{\varrho}}{\sqrt{\varrho}}\right)=0,
\end{array} & x\in\mathbb{R}^{d},\textrm{ }t\in\mathbb{R},\end{array}
\]
along with appropriate initial data, as discussed below. We define
the quantities $\varrho$, $J$ of the QHD system through their connection
with the Schr\"{o}dinger equation
\begin{equation}
i\psi_{t}=-\frac{1}{2}\Delta\psi+V\left(x\right)\psi,\textrm{ }x\in\mathbb{R}^{d},\textrm{ }t\in\mathbb{R},\label{eq:Schr_eq}
\end{equation}
subject to the initial condition
\[
\psi\left(x,t=0\right)=\psi_{0}\left(x\right),\textrm{ }x\in\mathbb{R}^{d}.
\]
Hence, $\varrho=\varrho\left(x,t\right):=\left|\psi\left(x,t\right)\right|^{2}$
and $J=J\left(x,t\right):=\mathrm{Im}\left(\bar{\psi}\left(x,t\right)\nabla\psi\left(x,t\right)\right)$.
We call $\varrho$ the position density and $J$ the current density
generated by the wave function $\psi$. For a detailed derivation
of the QHD system see, e.g., \cite{ancona1989quantum,degond2007isothermal,degond2007quantum,degond2003quantum,jungel2011quasi}.

The Schr\"{o}dinger energy is given by
\[
E\left(t\right):=\frac{1}{2}\int_{\mathbb{R}^{d}}\left|\nabla\psi\left(x,t\right)\right|^{2}dx+\int_{\mathbb{R}^{d}}V\left(x\right)\left|\psi\left(x,t\right)\right|^{2}dx.
\]
It is a constant of the motion, i.e., $E\left(t\right)=E\left(t=0\right)$
$\forall t\in\mathbb{R}$ (under the assumptions made below). 

Henceforth we assume that the potential $V$ is in $C_{loc}^{1,1}\left(\mathbb{R}^{d}\right)$ and bounded from below, such that the
Hamiltonian $H:=-\frac{1}{2}\Delta+V$ is essentially self-adjoint
on $L^{2}\left(\mathbb{R}^{d}\right)$.

Gasser and Markowich \cite{gasser1997quantum} showed that if $E\left(t=0\right)<\infty$
and $\psi_{0}\in L^{2}\left(\mathbb{R}^{d}\right)$, then the position
and current densities corresponding to the uniquely defined energy-conserving
solution $\psi\in C\left(\mathbb{R}_{t};L^{2}\left(\mathbb{R}_{x}^{d}\right)\right)$
satisfy the QHD system in the sense of distributions with initial
data $\varrho_{0}\left(x\right)=\varrho\left(x,t=0\right)=\left|\psi_{0}\left(x\right)\right|^{2}$
and $J_{0}\left(x\right)=J\left(x,t=0\right)=\mathrm{Im}\left(\bar{\psi}_{0}\left(x\right)\nabla\psi_{0}\left(x\right)\right)$.
Clearly we have $\varrho\in C\left(\mathbb{R}_{t};L^{1}\left(\mathbb{R}_{x}^{d}\right)\right)$
and $J\in C\left(\mathbb{R}_{t};\mathcal{D}'\left(\mathbb{R}_{x}^{d}\right)^{d}\right)$,
$J\left(t\right)\in L^{1}\left(\mathbb{R}^{d}\right)^{d}$ $\forall t\in\mathbb{R}$
(uniformly) because of the energy conservation.

Note that the QHD system is formulated above in conservative form,
namely, as compressible Euler equations where the enthalpy is given
by the sum of the so-called Bohm potential 
\[
Q:=-\frac{1}{2}\frac{\Delta\sqrt{\varrho}}{\sqrt{\varrho}}
\]
and the external potential $V$. In this form, vacuum states $\varrho=0$
do not have to be dealt with particularly (see \cite{gasser1997quantum}),
which is not the case for the associated non-conservative form. The
reason for this lies in the fact that the velocity $u:=J/\varrho$
cannot be reasonably defined for wave-functions $\psi$ which exhibit
nodes, i.e., vacuum states where $\psi=0$. Contrary to this, the
internal energy tensor $\left(J\otimes J\right)/\varrho$ makes perfect
sense when $J$ and $\varrho$ are defined through any bounded energy
wave-function (see \cite{antonelli2009finite,antonelli2012quantum}).
However, as we shall point out in the sequel, the conservative form
of QHD is prone to an ill-posedness in the form of non-uniqueness
of finite energy weak solutions, which is generated precisely by the
occurrence of vacuum states.

On the other hand, the QHD equations are the zeroth and first order moment equations of the Bohm equation with the mono kinetic closure. See \cite{gangbo2017bohm,markowich2010bohmian,markowich2012dynamics} for a detailed treatment of this topic.

To motivate our subsequent analysis, take $\psi_{0}=\psi_{0}\left(x\right)$
smooth in $L^{2}\left(\mathbb{R}^{d}\right)$ and define $\varrho_{0}=\varrho_{0}\left(x\right)$
and $J_{0}=J_{0}\left(x\right)$ as before. Moreover, consider a second
$L^{2}-$wave function, $\varphi_{0}=\varphi_{0}\left(x\right)$.
We want to determine the conditions under which 
\begin{equation}
\left|\varphi_{0}\right|^{2}=\varrho_{0},\textrm{ }\mathrm{Im}\left(\bar{\varphi}_{0}\nabla\varphi_{0}\right)=J_{0}\textrm{ }a.e.\textrm{ }on\textrm{ }\mathbb{R}^{d}.\label{eq:cond1}
\end{equation}
To this end, define $D_{0}:=\left\{ x\in\mathbb{R}^{d}:\varrho_{0}\neq0\right\} $
and let $\Lambda_{1}^{0},\Lambda_{2}^{0},\ldots$ be the (countably
many) connected components of $D_{0}$. Furthermore, let $\psi_{0}$
be given by
\[
\psi_{0}\left(x\right)=\sqrt{\varrho_{0}\left(x\right)}\exp\left(iS_{0}\left(x\right)\right),
\]
where $\varrho_{0}\left(x\right)$ and $S_{0}\left(x\right)$ are
smooth real valued functions. Then, 
\[
J_{0}\left(x\right)=\varrho_{0}\left(x\right)\nabla S_{0}\left(x\right).
\]
Set $\varphi_{0}\left(x\right)=\sqrt{\sigma_{0}\left(x\right)}\exp\left(iR_{0}\left(x\right)\right)$
with $\sigma_{0}\left(x\right)=\left|\varphi_{0}\left(x\right)\right|^{2}$.
Thus, (\ref{eq:cond1}) becomes
\[
\begin{array}{c}
\sigma_{0}=\varrho_{0},\\
\varrho_{0}\nabla R_{0}=\varrho_{0}\nabla S_{0},
\end{array}
\]
which implies $\nabla R_{0}=\nabla S_{0}$ on $D_{0}$. Therefore,
we find that due to the connectivity of $\Lambda_{k}^{0}$, $k\in\mathbb{N}$,
\[
R_{0}\left(x\right)=S_{0}\left(x\right)+C_{k},\textrm{ }x\in\Lambda_{k}^{0},
\]
where $C_{k}\in\mathbb{R}$. 

Hence, (\ref{eq:cond1}) is satisfied if and only if 
\[
\varphi_{0}\left(x\right)=\psi_{0}\left(x\right)e^{iC_{k}},\textrm{ }x\in\Lambda_{k}^{0},\textrm{ }k=1,2,\ldots\textrm{ }a.e.\textrm{ in }\mathbb{R}^{d},
\]
for some $C_{k}\in\mathbb{R}$. Note that, under appropriate smoothness
and geometric assumptions, the energy associated to $\varphi_{0}$
is finite for all choices of the constants $C_{k}$ (see Section 2).

We shall argue that, under certain assumptions, the constants $C_{k}$
can be chosen such that the initial wave-function $\varphi_{0}$ generates
a Schr\"{o}dinger solution whose position and current densities differ
from those generated by the Schr\"{o}dinger evolution of $\psi_{0}$ (the
original initial wave-function) at a time $T\neq0$. To be more specific,
consider the following scenario. Let $d=1$ and $\psi_{0}=\psi_{0}\left(x\right)$
smooth with $\psi_{0}\left(0\right)=0$, $\psi_{0}\left(x\right)\neq0$
for $x\neq0$, such that at some $T>0$ we have $\psi\left(x,T\right)\neq0$
for all $x\in\mathbb{R}$ (we shall show an example below). Then $D_{0}=\mathbb{R}-\left\{ 0\right\} $
and $\Lambda_{1}^{0}=\left\{ x<0\right\} $, $\Lambda_{2}^{0}=\left\{ x>0\right\} $.
Define
\[
\varphi_{0}\left(x\right):=\left\{ \begin{array}{ll}
\psi_{0}\left(x\right), & x<0,\\
\psi_{0}\left(x\right)e^{i\alpha}, & x>0,
\end{array}\right.\textrm{for some }\alpha\in\mathbb{R},\textrm{ }\alpha\neq0.
\]
It is straightforward to check that $\varphi_{0}\in H^{1}\left(\mathbb{R}^{d}\right)$.

Assume that the position and current densities of $\psi\left(T\right)$
and $\varphi\left(T\right)$ coincide. Since $\left\{ \left|\psi\left(\cdot,T\right)\right|^{2}\neq0\right\} =\mathbb{R}$,
we conclude that there is $\beta\in\mathbb{R}$ such that 
\[
e^{i\beta}\psi\left(x,T\right)=\varphi\left(x,T\right),\;\forall x\in\mathbb{R}.
\]
Now solve the Schr\"{o}dinger equation
\[
\begin{array}{c}
i\varphi_{t}=-\frac{1}{2}\varphi_{xx}+V\left(x\right)\varphi,\\
\varphi\left(T\right)\textrm{ given }\left(=e^{i\beta}\psi\left(x,T\right)\right),
\end{array}
\]
back to $t=0$ and find that $\varphi\left(x,t=0\right)=e^{i\beta}\psi\left(x,t=0\right)$
on $\mathbb{R}$, i.e., $\varphi_{0}\left(x\right)=e^{i\beta}\psi_{0}\left(x\right)$
on $\mathbb{R}$, which is a contradiction. Therefore, we conclude
the non-uniqueness of the corresponding initial value problem (IVP)
for the QHD system.

As an example, consider the harmonic oscillator in 1D, i.e., equation
(\ref{eq:Schr_eq}) with $V\left(x\right)=x^{2}/2$, $x\in\mathbb{R}$.
In this case there is a family of solutions (with appropriate initial
conditions) for $n=0,1,2,\ldots$ and energy levels given by $E_{n}=\left(n+1/2\right)$.
The first three members of the family are
\[
\psi_{0}\left(x,t\right)=e^{-iE_{0}t}e^{-x^{2}/2}=e^{-it/2}e^{-x^{2}/2},
\]
\[
\psi_{1}\left(x,t\right)=xe^{-\frac{3}{2}it}e^{-x^{2}/2},
\]
\[
\psi_{2}\left(x,t\right)=\left(1-2x^{2}\right)e^{-\frac{5}{2}it}e^{-x^{2}/2}.
\]
Due to the linearity of the Schr\"{o}dinger equation, any linear combination
of the previous solutions will also be a solution. Hence, consider
\[
\psi\left(x,t\right)=\psi_{0}\left(x,t\right)-\psi_{2}\left(x,t\right)=e^{-i\frac{t}{2}-\frac{x^{2}}{2}}\left(1-\left(1-2x^{2}\right)e^{-2it}\right).
\]
Then
\[
\psi\left(x,t\right)=e^{-i\frac{t}{2}-\frac{x^{2}}{2}}\left(1-\left(1-2x^{2}\right)\cos2t+i\left(1-2x^{2}\right)\sin2t\right)
\]
with
\[
\left|\psi\left(x,t\right)\right|^{2}=e^{-x^{2}}\left(\left(1-\left(1-2x^{2}\right)\cos2t\right)^{2}+\left(1-2x^{2}\right)^{2}\left(\sin2t\right)^{2}\right).
\]
Clearly, $\left|\psi\left(x,t=0\right)\right|^{2}=4e^{-x^{2}}x^{4}=0$
if and only if $x=0$. In general, $\left|\psi\left(x,t\right)\right|^{2}=0$
if and only if $\left(1-\left(1-2x^{2}\right)\cos2t\right)^{2}=0$
and $\left(1-2x^{2}\right)^{2}\left(\sin2t\right)^{2}=0$. Therefore,
$\left|\psi\left(x,t\right)\right|^{2}=0$ if and only if
\[
x=0\textrm{ and }t=\frac{l\pi}{2},\textrm{ }l\textrm{ even,}
\]
\[
\textrm{or}
\]
\[
x=\pm1\textrm{ and }t=\frac{l\pi}{2},\textrm{ }l\textrm{ odd.}
\]
Thus, we have our required scenario if we choose $\psi_{0}=\psi(x,t=0)$
and any $0<T<\pi/2$.

The primary goal of this paper is to give rather explicit (sufficient)
conditions on a bounded energy Schr\"{o}dinger solution which guarantee
that the QHD trajectory (position-current densities) generated by
such wave-function is not unique in the sense that a different QHD
trajectory intersects it at some $t\in\mathbb{R}$. It turns out that
we can state such conditions in connection with the topological structure
of the so-called nodal domains of the wave function, defined as the
connected components of the set where the wave function does not vanish
(in other words, the connected domains of the non-vacuum set of the
quantum flow).

The rest of the paper is organized as follows. In Section 2 we generalize
the previous result to wave function solution of the Schr\"{o}dinger IVP
(QHD-IVP) with less regularity, arbitrary dimension, and an arbitrary
number of connected components. In Section 3, we consider the case
in which the results involve a time interval and a full QHD trajectory.

\section{(Non) Uniqueness of the IVP}
\begin{proposition}
Let $\psi$ be a realization of an $H^{1}\left(\mathbb{R}^{d}\right)-$function,
$\varrho:=\left|\psi\right|^{2}$, and $J=\mathrm{Im}\left(\bar{\psi}\nabla\psi\right)$.
Let $\Lambda$ be a connected component of the (set-theoretic) support
of $\varrho$, defined by $D_{0}:=\left\{ x\in\mathbb{R}^{d}:\varrho\left(x\right)>0\right\} $,
and assume that $\Lambda$ is open in $\mathbb{R}^{d}$. Let $\varphi\in H^{1}\left(\mathbb{R}^{d}\right)$,
$\sigma:=\left|\varphi\right|^{2}$ and $I:=\mathrm{Im}\left(\bar{\varphi}\nabla\varphi\right)$.
If
\[
\varphi=\sigma,\;J=I,\textrm{ }a.e.\textrm{ }on\textrm{ }\Lambda
\]
then there exists a real constant, $C$, such that
\[
\psi\left(x\right)=\varphi\left(x\right)e^{iC},\;x\in\Lambda\;a.e.
\]
\end{proposition}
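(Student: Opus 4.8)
The plan is to study the pointwise quotient $q:=\varphi/\psi$ on $\Lambda$ and show it equals a unimodular constant a.e.\ there. This quotient is well defined because $\Lambda\subseteq D_{0}$, so $\varrho=|\psi|^{2}>0$ at every point of $\Lambda$ and $\psi$ has no zero in $\Lambda$; and since $|\varphi|=|\psi|$ on $\Lambda$ (the hypothesis $\sigma=\varrho$), $|q|=1$ a.e.\ on $\Lambda$. If we show $q\equiv e^{-iC}$ a.e.\ on $\Lambda$ for some real $C$, then $\psi=\varphi e^{iC}$ a.e.\ on $\Lambda$, which is the claim.

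Granting for the moment that $q\in W^{1,1}_{\mathrm{loc}}(\Lambda)$, the argument is short. From $\varphi=q\psi$ and the Leibniz rule, $\nabla\varphi=(\nabla q)\psi+q\nabla\psi$, so
\[
I=\mathrm{Im}(\bar\varphi\nabla\varphi)=\varrho\,\mathrm{Im}(\bar q\nabla q)+|q|^{2}\,\mathrm{Im}(\bar\psi\nabla\psi)=\varrho\,\mathrm{Im}(\bar q\nabla q)+J
\]
a.e.\ on $\Lambda$. Since $I=J$ and $\varrho>0$ there, this forces $\mathrm{Im}(\bar q\nabla q)=0$ a.e.\ on $\Lambda$. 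Differentiating the identity $|q|^{2}\equiv 1$ gives $2\,\mathrm{Re}(\bar q\nabla q)=\nabla|q|^{2}=0$ a.e.\ on $\Lambda$ as well, so $\bar q\nabla q=0$ and hence, since $|q|=1$, $\nabla q=0$ a.e.\ on $\Lambda$. As $\Lambda$ is open and connected, $q$ is a.e.\ constant on $\Lambda$; the modulus constraint makes that constant unimodular, and we are done.

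The step that requires care---and the one I expect to contain essentially all the difficulty---is the justification of the Leibniz computation and, more basically, of $q\in W^{1,1}_{\mathrm{loc}}(\Lambda)$. A realization of an $H^{1}$ function need not be continuous, so although $\psi$ vanishes nowhere on $\Lambda$, the density $\varrho$ may fail to be bounded below on compact subsets of $\Lambda$, and $1/\psi$ may fail to be locally bounded there (in dimension one $\psi$ is automatically continuous and this difficulty disappears; it is genuine only for $d\ge 2$). I would handle this by truncation: for $\delta>0$ the map $g_{\delta}(z):=\bar z/(\delta+|z|^{2})$ is bounded and $C^{1}$ with bounded derivatives, so $g_{\delta}(\psi)\in H^{1}_{\mathrm{loc}}(\mathbb{R}^{d})$ by the Sobolev chain rule and $q_{\delta}:=\varphi\,g_{\delta}(\psi)\in W^{1,1}_{\mathrm{loc}}(\mathbb{R}^{d})$ by the Leibniz rule. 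On $\Lambda$ one has the $\delta$-uniform bound $|q_{\delta}|\le|\varphi|/|\psi|=1$ a.e., whence $q_{\delta}\to q$ in $L^{1}_{\mathrm{loc}}(\Lambda)$ by dominated convergence; then, expanding $\nabla q_{\delta}$ explicitly and using $|\varphi|=|\psi|$ together with $I=J$ on $\Lambda$, one identifies $\lim_{\delta\downarrow 0}\nabla q_{\delta}$ and thereby the distributional gradient of $q$ on $\Lambda$, which by the computation above is $0$. Alternatively one can run the whole argument with the genuinely $W^{1,1}_{\mathrm{loc}}$ quantities $u:=\mathrm{Re}(\bar\psi\varphi)$, $w:=\mathrm{Im}(\bar\psi\varphi)$, $v:=\varrho$---which on $\Lambda$ satisfy $u^{2}+w^{2}=v^{2}$ and, as a consequence of $I=J$, $v\nabla u=u\nabla v$ and $v\nabla w=w\nabla v$---and pass to the limit as $\delta\downarrow 0$ in $u/(\delta+v)$ and $w/(\delta+v)$. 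Controlling these limits when $\varrho$ degenerates near $\partial\Lambda$ is, in my view, the crux of the proof; the algebra itself is routine.
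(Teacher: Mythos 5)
Your route is essentially the paper's: the paper divides by $\psi$, writes $0=\nabla\,\mathrm{Im}\ln\left(\varphi/\psi\right)$ a.e.\ on $\Lambda$, and invokes connectivity; your computation with $q=\varphi/\psi$, showing $\mathrm{Im}\left(\bar q\nabla q\right)=\mathrm{Re}\left(\bar q\nabla q\right)=0$ and hence $\nabla q=0$, is the same argument written multiplicatively, and the algebra in your second paragraph is correct. The step you single out as the crux --- that $q\in W^{1,1}_{\mathrm{loc}}\left(\Lambda\right)$ with the Leibniz rule valid --- is exactly the step the paper also performs formally without justification, so your write-up is, if anything, more candid than the original.

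However, the truncation sketch does not close that step, and it cannot be closed at the stated level of generality. From $q_{\delta}=\varphi\bar\psi/\left(\delta+\varrho\right)$ one does get $\nabla q_{\delta}\to0$ pointwise a.e.\ on $\Lambda$ (using $\sigma=\varrho$ and $I=J$), but there is no $\delta$-uniform integrable dominating function where $\varrho$ degenerates, and pointwise convergence of $\nabla q_{\delta}$ does not identify the distributional gradient of $q$: the distributional limit can acquire a nonzero singular part. This is not a removable technicality. Take $d=2$, let $\psi$ be the realization of $x_{1}e^{-\left|x\right|^{2}}$ redefined to equal $e^{-\left|x\right|^{2}}$ on the null set $\left\{ x_{1}=0\right\} $, so that $D_{0}=\Lambda=\mathbb{R}^{2}$ is open and connected, and let $\varphi=\left|x_{1}\right|e^{-\left|x\right|^{2}}\in H^{1}\left(\mathbb{R}^{2}\right)$. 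Then $\sigma=\varrho$ and $I=J=0$ a.e.\ on $\Lambda$, yet $\varphi/\psi=\mathrm{sgn}\left(x_{1}\right)$ is not a.e.\ constant, so the conclusion fails; here $\nabla q_{\delta}\to0$ a.e.\ while $q_{\delta}\to\mathrm{sgn}\left(x_{1}\right)$, whose distributional gradient is a surface measure. Some hypothesis forcing $1/\psi\in L^{\infty}_{\mathrm{loc}}\left(\Lambda\right)$ is therefore genuinely needed --- e.g.\ continuity of the realization, which the paper assumes in all subsequent applications (Remark 1, Proposition 2). Under that hypothesis $\left|\psi\right|$ is bounded below on compact subsets of the open set $\Lambda$, $q\in H^{1}_{\mathrm{loc}}\left(\Lambda\right)$ by the ordinary quotient rule, and your two-line computation finishes the proof rigorously; I would add that assumption explicitly rather than leave the degenerate case as an open ``crux''.
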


\begin{proof}
Clearly, $\varrho$, $\sigma$, $I$, $J\in L^{1}\left(\mathbb{R}^{d}\right)$.
Consider, for $\omega\in C_{0}^{\infty}\left(\Lambda\right)$, $\omega\geq0$:
\[
\begin{array}{l}
\int_{\Lambda}\left|I-J\right|\omega dx\\
=\int_{\Lambda}\left|\mathrm{Im}\left(\bar{\varphi}\nabla\varphi-\bar{\psi}\nabla\psi\right)\right|\omega dx\\
=\int_{\Lambda}\left|\mathrm{Im}\left(\frac{\nabla\varphi}{\varphi}-\frac{\nabla\psi}{\psi}\right)\right|\omega\varrho dx\\
=\int_{\Lambda}\left|\mathrm{Im}\left(\nabla\ln\frac{\varphi}{\psi}\right)\right|\omega\varrho dx\\
=\int_{\Lambda}\left|\nabla\mathrm{Im}\left(\ln\frac{\varphi}{\psi}\right)\right|\omega\varrho dx.
\end{array}
\]
Hence,
\[
0=\nabla\mathrm{Im}\left(\ln\left(\frac{\varphi}{\psi}\right)\right)\textrm{ }a.e.\textrm{ on }\Lambda.
\]
Therefore,
\[
\arg\psi=\arg\varphi+C+2k\pi.
\]

We have
\[
\psi=\sqrt{\varrho}e^{i\arg\psi},\textrm{ }\varphi=\sqrt{\sigma}e^{i\arg\varphi},
\]
and since $\varrho=\sigma$, we conclude that
\[
\psi=\varphi e^{iC_{\alpha}}\textrm{ }a.e.\textrm{ on }\Lambda.
\]
\end{proof}
\begin{remark}
Assume that all the connected components $\Lambda_{\alpha}$ of $D_{0}$
are of locally finite perimeter ($\Lambda_{\alpha}$ are Caccioppoli
sets), that $\psi$ is continuous on $\mathbb{R}^{d}$ and in $H^{1}\left(\mathbb{R}^{d}\right)$.
Then the characteristic function $\mathbf{1}_{\Lambda_{\alpha}}$
of $\Lambda_{\alpha}$ has locally bounded total variation, which
in turn implies that its distributional gradient is a vector valued
(signed) Radon measure supported on the boundary of $\Lambda_{\alpha}$.
Thus the function $\varphi$, defined by
\[
\varphi=\psi\sum_{\alpha\in A}e^{-iC_{\alpha}}\mathbf{1}_{\Lambda_{\alpha}},
\]
is in $H^{1}\left(\mathbb{R}^{d}\right)$, and we have $\varrho=\sigma$
and $J=I$ on $\mathbb{R}^{d}$. Specifically, we compute 
\[
\nabla\varphi=\nabla\psi\sum_{\alpha\in A}e^{-iC}\mathbf{1}_{\Lambda_{\alpha}},
\]
taking into account that $\left.\psi\right|_{\partial\Lambda_{\alpha}}=0$.
Note that the index set $A\subseteq\mathbb{N}$.
\end{remark}

\begin{proposition}
Let $\psi=\psi\left(x,t\right)$ be a bounded energy solution of the
Schrödinger equation on $\left[0,T\right]$ with $\psi$ continuous
(in $x$) for $t=0,T$. Define
\[
\begin{array}{c}
N_{0}\in\mathbb{N}:=\textrm{ Number of connected components of }\left\{ \varrho\left(\cdot,t=0\right)\neq0\right\} :=D_{0},\\
N_{T}\in\mathbb{N}:=\textrm{ Number of connected components of }\left\{ \varrho\left(\cdot,t=T\right)\neq0\right\} :=D_{T},
\end{array}
\]
and assume $N_{T}<N_{0}\leq\infty$. Furthermore, assume that all
the connected components of $D_{0}$ are of locally finite perimeter.
Then there is non-uniqueness of Schrödinger-generated bounded energy
solutions of the QHD system on $\left[0,T\right]$, with $\varrho\left(\cdot,t=0\right)$,
$J\left(\cdot,t=0\right)$ given. 
\end{proposition}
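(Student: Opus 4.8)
The plan is to argue by contradiction. I will exhibit a finite--dimensional family of competing initial wave functions, all producing the \emph{same} initial QHD data $\varrho(\cdot,0),J(\cdot,0)$ as $\psi$, and then use Brouwer's invariance of domain to conclude that they cannot \emph{all} re-generate the trajectory of $\psi$. Let $U(t)$ be the (unitary) Schr\"odinger propagator, so that $\psi(\cdot,t)=U(t)\psi_0$ on $[0,T]$ with $\psi_0:=\psi(\cdot,0)\in H^1(\mathbb{R}^d)$, and let $\Lambda_1,\Lambda_2,\dots$ be the connected components of $D_0$; since $\psi_0$ is continuous these are open and $\partial\Lambda_\alpha\subseteq\{\psi_0=0\}$. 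Because $N_0\ge N_T+1$ I may fix $n:=N_T+1$ of them, say $\Lambda_1,\dots,\Lambda_n$, and for $\vec C=(C_1,\dots,C_n)\in\mathbb{T}^n$ set
\[
\varphi_0(\vec C):=\psi_0\Big(\textstyle\sum_{j=1}^{n}e^{-iC_j}\mathbf{1}_{\Lambda_j}+\sum_{\alpha>n}\mathbf{1}_{\Lambda_\alpha}\Big).
\]
By the Remark (the $\Lambda_\alpha$ being Caccioppoli sets and $\psi_0$ continuous in $H^1$), $\varphi_0(\vec C)\in H^1(\mathbb{R}^d)$ with $|\varphi_0(\vec C)|^2=\varrho(\cdot,0)$ and $\mathrm{Im}(\bar\varphi_0(\vec C)\nabla\varphi_0(\vec C))=J(\cdot,0)$ a.e.; since moreover $\nabla\varphi_0(\vec C)=\nabla\psi_0\big(\sum_{j}e^{-iC_j}\mathbf{1}_{\Lambda_j}+\sum_{\alpha>n}\mathbf{1}_{\Lambda_\alpha}\big)$, one gets $|\nabla\varphi_0(\vec C)|=|\nabla\psi_0|$ a.e., so its Schr\"odinger energy equals $E[\psi_0]<\infty$. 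By the Gasser--Markowich result quoted in the Introduction, $\varphi_0(\vec C)$ then generates a bounded energy weak solution of QHD on $[0,T]$ with initial data $\varrho(\cdot,0),J(\cdot,0)$; one such solution is the one generated by $\psi_0$ itself.

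Suppose, toward a contradiction, that for \emph{every} $\vec C\in\mathbb{T}^n$ the solution generated by $\varphi_0(\vec C)$ agrees with the one generated by $\psi$ at time $T$, i.e.\ $|\varphi(\cdot,T;\vec C)|^2=|\psi(\cdot,T)|^2$ and $\mathrm{Im}(\bar\varphi(\cdot,T;\vec C)\nabla\varphi(\cdot,T;\vec C))=\mathrm{Im}(\bar\psi(\cdot,T)\nabla\psi(\cdot,T))$ a.e., where $\varphi(\cdot,t;\vec C):=U(t)\varphi_0(\vec C)$. Since $\psi(\cdot,T)$ is continuous, $D_T$ is open with finitely many components $\Lambda_1^T,\dots,\Lambda_m^T$, $m:=N_T$ (and $m\ge1$, since $m=0$ would force $\psi\equiv0$). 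Applying Proposition 1 on each $\Lambda_\beta^T$ to the $H^1$ functions $\psi(\cdot,T)$ and $\varphi(\cdot,T;\vec C)$, and using that both vanish a.e.\ off $D_T$, I obtain
\[
\varphi(\cdot,T;\vec C)=\psi(\cdot,T)\,\textstyle\sum_{\beta=1}^{m}e^{-i\theta_\beta(\vec C)}\mathbf{1}_{\Lambda_\beta^T}\qquad\text{a.e. on }\mathbb{R}^d
\]
for suitable reals $\theta_\beta(\vec C)$.

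Now consider the continuous maps $\Phi:\mathbb{T}^{n}\to L^2(\mathbb{R}^d)$, $\Phi(\vec C):=U(T)\varphi_0(\vec C)=\psi(\cdot,T)+\sum_{j=1}^{n}(e^{-iC_j}-1)\,U(T)(\psi_0\mathbf{1}_{\Lambda_j})$, and $\Psi:\mathbb{T}^{m}\to L^2(\mathbb{R}^d)$, $\Psi(\vec\theta):=\psi(\cdot,T)\sum_{\beta=1}^{m}e^{-i\theta_\beta}\mathbf{1}_{\Lambda_\beta^T}$. The functions $U(T)(\psi_0\mathbf{1}_{\Lambda_j})$ are nonzero and mutually orthogonal (unitarity of $U(T)$ and disjointness of the $\Lambda_j$), so $\Phi$ is injective; and $\psi(\cdot,T)$ is nonzero on each $\Lambda_\beta^T$, so $\Psi$ is injective, hence a homeomorphism of the compact torus $\mathbb{T}^m$ onto $K:=\Psi(\mathbb{T}^m)\subseteq L^2$. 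The displayed identity says precisely that $\Phi(\mathbb{T}^n)\subseteq K$, so $F:=\Psi^{-1}\circ\Phi:\mathbb{T}^n\to\mathbb{T}^m$ is well defined, continuous and, being a composition of injections, injective. But $n=m+1$, so $F$ is a continuous injection of the $(m+1)$--manifold $\mathbb{T}^{m+1}$ into the $m$--manifold $\mathbb{T}^m$; restricting it to a coordinate chart and composing with a chart of $\mathbb{T}^m$ and the inclusion $\mathbb{R}^m\hookrightarrow\mathbb{R}^{m+1}$ would produce a continuous injection of an open subset of $\mathbb{R}^{m+1}$ into $\mathbb{R}^{m+1}$ whose image has empty interior, contradicting Brouwer's invariance of domain. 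Hence the assumption fails: there exists $\vec C^{\ast}\in\mathbb{T}^n$ for which the QHD solution generated by $\varphi_0(\vec C^{\ast})$ differs from the one generated by $\psi$ at time $T$, while both carry the initial data $\varrho(\cdot,0),J(\cdot,0)$ — which is the asserted non-uniqueness.

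I expect the main obstacle to be the bookkeeping that makes the dimension count rigorous: one must use exactly $N_T+1$ \emph{free} phases (not $N_T+1$ up to a common global phase, which would only give $\mathbb{T}^{N_T}$ and defeat the count), check that $\Phi,\Psi$ are genuinely continuous and injective into $L^2$ as written, and justify the continuity of $\Psi^{-1}$ on $K$ (which is where compactness of $\mathbb{T}^{m}$ enters) before invoking invariance of domain. The other ingredients — that each competitor lies in $H^1(\mathbb{R}^d)$ with the correct initial data and finite energy, that it generates a QHD solution, and that Proposition 1 applies at $t=T$ — are comparatively routine, given the Remark, the Gasser--Markowich existence result, and the continuity of $\psi(\cdot,0)$ and $\psi(\cdot,T)$.
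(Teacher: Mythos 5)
Your proof is correct and follows essentially the same route as the paper's: phase--shift $\psi_0$ on connected components of $D_0$, evolve to time $T$, and obtain a continuous injection between tori of different dimensions, contradicting Brouwer's invariance of domain. The only differences are refinements --- you use just $N_T+1$ components rather than all $N_0$ (which handles $N_0=\infty$ uniformly), and you justify the continuity of $F=\Psi^{-1}\circ\Phi$ via compactness of $\mathbb{T}^{N_T}$, where the paper argues by a direct $L^2$ computation.
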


\begin{proof}
To simplify the presentation, assume that $N_{0}<\infty$ . Let $\Lambda_{1}^{0},\ldots,\Lambda_{N_{0}}^{0}$
be the connected components of $D_{0}$ and let $\Lambda_{1}^{T},\ldots,\Lambda_{N_{T}}^{T}$
be the connected components of $D_{T}$. 

Note that $\psi\left(\cdot,t=0\right)$ and
\[
\varphi\left(x,t=0\right):=\psi\left(x,t=0\right)e^{i\alpha_{l}},\;x\in\Lambda_{l}^{0},\;l=1,\ldots,N_{0}
\]
are in $H^{1}\left(\mathbb{R}^{d}\right)$ and generate the same position
and current densities at $t=0$ for all choices $\alpha_{l}\in\mathbb{R}$. 

Assume that the QHD system has a unique Schrödinger-generated bounded
energy solution on $\left[0,T\right]$. Then, at $t=T$, we must have
\[
\varphi\left(x,t=T\right)=\psi\left(x,t=T\right)e^{i\beta_{k}},\;x\in\Lambda_{k}^{T},\;k=1,\ldots,N_{T},
\]
where $\varphi\left(t\right)$ is the Schrödinger solution with initial
datum $\varphi\left(\cdot,t=0\right)$ and $\varrho=\left|\varphi\right|^{2}$,
$J=\mathrm{Im}\left(\bar{\varphi}\nabla\varphi\right)$. This defines
a map $\mathbb{T}^{N_{0}}\mapsto\mathbb{T}^{N_{T}}$ ($\mathbb{T}^{n}$
is the $n-$dimensional torus)
\[
F:\left(\alpha_{1},\ldots,\alpha_{N_{0}}\right)\rightarrow\left(\beta_{1},\ldots,\beta_{N_{T}}\right).
\]
This map is injective since the backward Schrödinger IVP has the uniqueness
property. Continuity of $F$ is a consequence the $L^{2}\left(\mathbb{R}^{d}\right)-$continuity
of the Schrödinger evolution in the following way. Take a sequence
$\left(\alpha_{1}^{\left(m\right)},\ldots,\alpha_{N_{0}}^{\left(m\right)}\right)\rightarrow\left(\alpha_{1}^{\left(0\right)},\ldots,\alpha_{N_{0}}^{\left(0\right)}\right)$
as $m\rightarrow\infty$. Hence,
\[
\begin{array}{c}
\sum_{l=1}^{N_{0}}\int_{\Lambda_{l}^{0}}\left|\psi\left(x,t=0\right)e^{i\alpha_{l}^{\left(m\right)}}-\psi\left(x,t=0\right)e^{i\alpha_{l}^{\left(0\right)}}\right|^{2}dx\\
=\sum_{l=1}^{N_{0}}\int_{\Lambda_{l}^{0}}\left|\psi\left(x,t=0\right)\right|^{2}\left|1-e^{i\left(\alpha_{l}^{\left(0\right)}-\alpha_{l}^{\left(m\right)}\right)}\right|^{2}dx\rightarrow0.
\end{array}
\]
Therefore, 
\[
\psi^{\left(m\right)}\left(x,t=0\right):=\psi\left(x,t=0\right)e^{i\alpha_{l}^{\left(m\right)}},\;x\in\Lambda_{l}^{0},\;l=1,\ldots,N_{0}
\]
converges to
\[
\psi^{\left(0\right)}\left(x,t=0\right):=\psi\left(x,t=0\right)e^{i\alpha_{l}^{\left(0\right)}},\;x\in\Lambda_{l}^{0},\;l=1,\ldots,N_{0}
\]
in $L^{2}\left(\mathbb{R}^{d}\right)$. By $L^{2}-$continuity of
the Schrödinger evolution, we have that
\[
\psi^{\left(m\right)}\left(x,t=T\right):=\psi\left(x,t=T\right)e^{i\beta_{k}^{\left(m\right)}},\;x\in\Lambda_{k}^{T},\;k=1,\ldots,N_{T}
\]
converges in $L^{2}\left(\mathbb{R}^{d}\right)$ to
\[
\psi^{\left(0\right)}\left(x,t=T\right):=\psi\left(x,t=T\right)e^{i\beta_{k}^{\left(0\right)}},\;x\in\Lambda_{k}^{T},\;k=1,\ldots,N_{T}.
\]
By the same computation, we find that
\[
\begin{array}{c}
\int_{\mathbb{R}^{d}}\left|\psi^{\left(0\right)}\left(x,T\right)-\psi^{\left(m\right)}\left(x,T\right)\right|^{2}dx\\
=\sum_{k=1}^{N_{T}}\int_{\Lambda_{k}^{T}}\left|\psi\left(x,T\right)\right|^{2}dx\left|1-e^{i\left(\beta_{k}^{\left(0\right)}-\beta_{k}^{\left(m\right)}\right)}\right|^{2}\rightarrow0
\end{array}
\]
as $m\rightarrow\infty$ if and only if $\beta_{k}^{\left(m\right)}\rightarrow\beta_{k}^{\left(0\right)},$
$k=1,\ldots,N_{T}$. Hence, $F$ is continuous. 

Since by assumption $N_{0}>N_{T}$, we have a contradiction due to
the fact that, as a consequence of the Brouwer invariance of domain
theorem (see, e.g., \cite{dold2012lectures,hatcher2002algebraic,massey1991basic}),
there is no continuous injective mapping from $\mathbb{R}^{n}$ to
$\mathbb{R}^{m}$ when $n>m$. 

It is straightforward to show that the result also holds for $N_{0}=\infty>N_{T}$.
\end{proof}

\section{Trajectory case}

Let $\psi\in C\left(\mathbb{R}_{t};L^{2}\left(\mathbb{R}_{x}^{d}\right)\right)\cap L^{\infty}\left(\mathbb{R}_{t};H^{1}\left(\mathbb{R}_{x}^{d}\right)\right)$
be a bounded energy solution of the Schrödinger equation (\ref{eq:Schr_eq}).
Let $\varrho:=\left|\psi\right|^{2}$ and $J:=\mathrm{Im}\left(\bar{\psi}\nabla\psi\right)$,
and assume that $\varrho\in C\left(\mathbb{R}_{x}^{d}\times\mathbb{R}_{t}\right)$.
Denote
\[
\Omega:=\left\{ \left(x,t\right)\in\mathbb{R}^{d+1}:\varrho\left(x,t\right)\neq0\right\} ,
\]
and let $\Omega_{\alpha}\subseteq\Omega$, $\alpha\in B\subseteq\mathbb{N}$,
be the connected components of $\Omega$. Note that each set $\Omega_{\alpha}$
is open in $\mathbb{R}^{d+1}$ and 
\[
\partial\Omega_{\alpha}\subseteq\left\{ \left(x,t\right)\in\mathbb{R}^{d+1}:\varrho\left(x,t\right)=0\right\} .
\]
\begin{proposition}
Let $\varphi\in C\left(\mathbb{R}_{t};L^{2}\left(\mathbb{R}_{x}^{d}\right)\right)\cap L^{\infty}\left(\mathbb{R}_{t};H^{1}\left(\mathbb{R}_{x}^{d}\right)\right)$
be another bounded energy solution of the Schrödinger equation (\ref{eq:Schr_eq})
and denote $\sigma:=\left|\varphi\right|^{2}$, $I:=\mathrm{Im}\left(\bar{\varphi}\nabla\varphi\right)$.
If
\[
\varrho=\sigma,\textrm{ }J=I\textrm{ }\forall t\textrm{ }a.e.\textrm{ }in\textrm{ }\mathbb{R}_{x}^{d},
\]
then there exists a constant, $C_{\alpha}\in\mathbb{R}$, for every
$\alpha\in B$ such that $\varphi\left(x,t\right)=\psi\left(x,t\right)e^{iC_{\alpha}}$
a.e. in $\Omega_{\alpha}$.
\end{proposition}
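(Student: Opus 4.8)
The plan is to prove that the unit‑modulus function $w:=\varphi/\psi$, which is well defined a.e.\ on $\Omega_{\alpha}$ because $\varrho>0$ there, is locally constant on $\Omega_{\alpha}$; since $\Omega_{\alpha}$ is connected this yields $w\equiv e^{iC_{\alpha}}$ a.e.\ for a single real constant $C_{\alpha}$, i.e.\ $\varphi=\psi\,e^{iC_{\alpha}}$ a.e.\ in $\Omega_{\alpha}$, which is the assertion. On $\Omega_{\alpha}$ one has $\sigma=\varrho>0$, so $\psi$ and $\varphi$ are both nonzero a.e.\ there and $|w|=\sqrt{\sigma/\varrho}=1$. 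To keep all manipulations meaningful at the available regularity I would work on small boxes: given $(x_{0},t_{0})\in\Omega_{\alpha}$, pick a neighbourhood $U\times(a,b)\subset\Omega_{\alpha}$ on whose closure $\varrho\ge\delta>0$ (possible since $\varrho$ is continuous); then $|\psi|,|\varphi|\ge\delta^{1/2}$ a.e.\ there and, since $\psi(\cdot,t),\varphi(\cdot,t)\in H^{1}(\mathbb{R}^{d})$ for a.e.\ $t$, one gets $1/\psi\in H^{1}_{loc}$ and hence $w=\varphi/\psi\in H^{1}_{loc}$ in $x$, with $|w|=1$.

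For the spatial directions I would repeat the computation in the proof of Proposition~1: writing $\psi=\sqrt{\varrho}\,e^{i\arg\psi}$ and $\varphi=\sqrt{\varrho}\,e^{i\arg\varphi}$ on the box, one has $J=\varrho\,\nabla_{x}\arg\psi$ and $I=\varrho\,\nabla_{x}\arg\varphi$, so the hypotheses $J=I$ and $\varrho>0$ force $\nabla_{x}(\arg\varphi-\arg\psi)=0$ a.e., that is, $\nabla_{x}w=0$ on $\Omega_{\alpha}$. Consequently, on each box $U\times(a,b)$ we may write $\varphi(x,t)=\tilde{w}(t)\,\psi(x,t)$ with $\tilde{w}$ of modulus one depending on $t$ alone.

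The time direction is where the Schr\"odinger equation enters, and the crucial point is that the potential drops out. Testing (\ref{eq:Schr_eq}) for $\psi$ against a fixed $\eta\in C_{0}^{\infty}(U)$ and integrating by parts gives $\frac{d}{dt}\int\psi(\cdot,t)\eta=i\int\psi(\cdot,t)\bigl(\frac{1}{2}\Delta\eta-V\eta\bigr)$, and likewise for $\varphi$; substituting $\varphi=\tilde{w}(t)\psi$ shows that $g(t):=\int\psi(\cdot,t)\eta$ satisfies $(\tilde{w}g)'=\tilde{w}g'$ as a distribution on $(a,b)$. Since $\frac{1}{2}\Delta\eta-V\eta\in L^{2}_{loc}$ and $\psi\in C(\mathbb{R}_{t};L^{2})$, $g$ is in fact $C^{1}$, so the Leibniz rule applies and gives $\tilde{w}'g=0$; choosing $\eta$ with $\int\psi(\cdot,t_{0})\eta\neq0$ (possible since $\int_{U}|\psi(\cdot,t_{0})|^{2}>0$) makes $g$ nonvanishing near $t_{0}$, whence $\tilde{w}'=0$ near $t_{0}$, and therefore $\tilde{w}$ is constant on $(a,b)$. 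Combined with $\nabla_{x}w=0$, $w$ is locally constant on $\Omega$, hence constant on the connected set $\Omega_{\alpha}$, which finishes the proof. (Equivalently one can exploit the identity $\partial_{t}(\bar{\psi}\varphi)=\frac{i}{2}\,\mathrm{div}_{x}(\bar{\psi}\nabla\varphi-\varphi\nabla\bar{\psi})$, valid for any two solutions of (\ref{eq:Schr_eq}) because the $V$ terms cancel, together with $\nabla_{x}w=0$ and the continuity equation $\varrho_{t}+\mathrm{div}\,J=0$.)

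I expect the main obstacle to be precisely the bookkeeping forced by the low regularity $\psi,\varphi\in C(\mathbb{R}_{t};L^{2})\cap L^{\infty}(\mathbb{R}_{t};H^{1})$, for which $\psi_{t}$ is only an $H^{-1}$‑valued distribution: one must justify dividing by $\psi$, writing $\varphi=\tilde{w}(t)\psi$ with $\tilde{w}\in L^{\infty}$, applying the product rule to $\tilde{w}(t)\psi$ (or to $\tilde{w}g$), and passing from $\tilde{w}'g=0$ to $\tilde{w}'=0$. All of this is routine once one localizes to boxes on which $\varrho$ is bounded below and tests against functions of the form $\eta(x)\zeta(t)$; continuity of $\varrho$ and openness of $\Omega_{\alpha}$ make ``connected component'' and the step from locally constant to constant unambiguous, and the constants $C_{\alpha}$ are allowed to differ from component to component — which is exactly the source of the non‑uniqueness studied in this section.
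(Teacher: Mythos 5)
Your argument is correct and follows essentially the same route as the paper: the spatial constancy of $\varphi/\psi$ comes from the same $J=I$, $\varrho=\sigma$ computation as in Proposition 1, the time-independence of the phase comes from inserting $\varphi=\tilde{w}(t)\psi$ into the Schr\"odinger equation (where, as you note, the potential cancels), and connectedness of $\Omega_{\alpha}$ patches the local constants together exactly as the paper's chain-of-balls argument does. Your weak formulation of the time step --- testing against $\eta\in C_{0}^{\infty}(U)$ so that $\dot{\tilde{w}}$ makes sense at the stated regularity --- is a more careful rendering of the paper's formal substitution, not a different method.
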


\begin{proof}
Let $\omega\in C_{0}^{\infty}\left(\Omega_{\alpha}\right)$ and consider
\[
\begin{array}{l}
\int_{\mathbb{R}_{t}}\int_{\mathbb{R}_{x}^{d}}\left|J-I\right|\omega dx\\
=\int_{\mathbb{R}_{t}}\int_{\mathbb{R}_{x}^{d}}\left|\mathrm{Im}\left(\bar{\psi}\nabla\psi-\bar{\varphi}\nabla\varphi\right)\right|\omega dx\\
=\int_{\mathbb{R}_{t}}\int_{\mathbb{R}_{x}^{d}}\left|\mathrm{Im}\left(\frac{\nabla\psi}{\psi}-\frac{\nabla\varphi}{\varphi}\right)\right|\omega\varrho dx\\
=\int_{\mathbb{R}_{t}}\int_{\mathbb{R}_{x}^{d}}\left|\nabla_{x}\mathrm{Im}\ln\left(\frac{\psi}{\varphi}\right)\right|\omega\varrho dx=0.
\end{array}
\]
Now let $\omega$ be supported in an arbitrary convex subset, $\Sigma_{\alpha}$,
of $\Omega_{\alpha}$. Then
\[
\arg\varphi=\arg\psi+C_{\Sigma_{\alpha}}\left(t\right)\textrm{ in }\Sigma_{\alpha}
\]
for some measurable function $C_{\Sigma_{\alpha}}=C_{\Sigma_{\alpha}}\left(t\right)$
on $\mathbb{T}^{d}$. This gives
\[
\varphi=\psi e^{iC_{\Sigma_{\alpha}}\left(t\right)}\textrm{ in }\Sigma_{\alpha}.
\]
We insert into the Schrödinger equation and find ($\psi\neq0$, $\varphi\neq0$
in $\Omega_{\alpha}$)
\[
\dot{C}_{\Sigma_{\alpha}}\left(t\right)=0\textrm{ for all }t\in\Sigma'_{\alpha}=\Sigma_{\alpha}\cap\left\{ \left(y,t\right):y\in\mathbb{R}^{d}\right\} \neq\emptyset.
\]
Therefore for every convex open subset $\Sigma_{\alpha}$ of $\Omega_{\alpha}$
there exists $C_{\Sigma_{\alpha}}\in\mathbb{R}$ such that
\[
\varphi=\psi e^{iC_{\Sigma_{\alpha}}}\textrm{ in }\Sigma_{\alpha}.
\]
To prove that $C_{\Sigma_{\alpha}}$ depends only on $\alpha$ and
not on the convex subset of $\Omega_{\alpha}$ we take any two points
$\left(x_{1},t_{1}\right),\left(x_{2},t_{2}\right)\in\Omega_{\alpha}$
and connect them by a continuous curve, $\Gamma\subseteq\Omega_{\alpha}$.
Now cover $\Gamma$ by (finitely many, since $\Gamma$ is compact)
open balls $B_{\alpha,1},\ldots,B_{\alpha,L}\subseteq\Omega_{\alpha}$.
In each ball $B_{\alpha,l}$ we have
\[
\varphi=\psi e^{iC_{B_{\alpha,l}}}.
\]
Since for each $B_{\alpha,l_{1}}$ there exists $B_{\alpha,l_{2}}$
with $l_{1}\neq l_{2}$ such that
\[
B_{\alpha,l_{1}}\cap B_{\alpha,l_{2}}\neq\emptyset,
\]
we conclude that $C_{B_{\alpha,1}}=\cdots=C_{B_{\alpha,l}}.$ Therefore,
a unique constant $C=C_{\Omega_{\alpha}}$ exists such that
\[
\varphi=\psi e^{iC_{\Omega_{\alpha}}}\textrm{ on }\Omega_{\alpha}\textrm{ }a.e.
\]
\end{proof}
\begin{remark}
Let $C_{\alpha}\in\mathbb{R}$ for $\alpha\in B$, $\psi\in C\left(\mathbb{R}_{x}^{d}\times\mathbb{R}_{t}\right)\cap L^{\infty}\left(\mathbb{R}_{t};H^{1}\left(\mathbb{R}^{d}\right)\right)$.
Then the function
\[
\varphi\left(x,t\right)=\psi\left(x,t\right)\sum_{\alpha\in B}e^{iC_{\alpha}}\mathbf{1}_{\Omega_{\alpha}}
\]
is in $L^{\infty}\left(\mathbb{R}_{t};H^{1}\left(\mathbb{R}^{d}\right)\right)$
if all sets $\Omega_{\alpha}$ have locally finite perimeter. Actually,
it suffices to ask that $\nabla_{x}\mathbf{1}_{\Omega_{\alpha}}\in L_{loc}^{\infty}\left(\mathbb{R}_{t};\mathcal{M}\left(\mathbb{R}_{x}^{d}\right)^{d}\right)$,
where $\mathcal{M}\left(\mathbb{R}_{x}^{d}\right)$ is the set of
(scalar signed) Radon measures on $\mathbb{R}_{x}^{d}$.
\end{remark}

\begin{definition}
Let
\[
\left\{ \left(\varrho\left(t\right),J\left(t\right)\right):t\in\mathbb{R}\right\} \in C\left(\mathbb{R}_{t};L^{1}\left(\mathbb{R}^{d}\right)\right)\times\left(L^{\infty}\left(\mathbb{R}_{t};L^{1}\left(\mathbb{R}^{d}\right)\right)^{d}\cap C\left(\mathbb{R}_{t};\mathcal{D}'\left(\mathbb{R}_{x}^{d}\right)\right)^{d}\right)
\]
be a solution-curve of the QHD system for $-\infty<t<\infty$. It
is called trajectory-unique if for all other such solution curves
$\left\{ \left(\sigma\left(t\right),I\left(t\right)\right):t\in\mathbb{R}\right\} $
we have $\left(\varrho,J\right)\left(t\right)\neq\left(\sigma,I\right)\left(t\right)$
$\forall t\in\mathbb{R}$ or $\left(\varrho,J\right)\left(t\right)=\left(\sigma,I\right)\left(t\right)$
$\forall t\in\mathbb{R}$ .

In simple terms, this means that different trajectories do not intersect,
neither forward nor backward in time. 
\end{definition}

\begin{proposition}
Consider a QHD-trajectory, $S=\left\{ \left(\varrho\left(t\right),J\left(t\right)\right):t\in\mathbb{R}\right\} $,
with $\psi\in C\left(\mathbb{R}_{x}^{d}\times\mathbb{R}_{t}\right)$
and bounded energy. If for some $T\in\mathbb{R}$ the number $N$
of connected components of 
\[
\mathbb{R}^{d+1}\supseteq\Omega:=\left\{ \varrho\left(x,t\right)\neq0:\left(x,t\right)\in\mathbb{R}^{d+1}\right\} 
\]
 is smaller than the number $N_{T}$ of connected components of 
\[
\mathbb{R}^{d}\supseteq\Lambda_{T}:=\left\{ \varrho\left(x,T\right)\neq0:x\in\mathbb{R}^{d}\right\} ,
\]
then $S$ is not trajectory-unique, provided that all connected components
have locally finite perimeter. 
\end{proposition}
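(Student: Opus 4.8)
The plan is to transplant the invariance‑of‑domain argument of Proposition 2 to the space--time picture, perturbing the wave function on the single time slice $t=T$ rather than at $t=0$. Write $\Omega_{1},\dots,\Omega_{N}$ for the connected components of $\Omega$ and $\Lambda_{1}^{T},\dots,\Lambda_{N_{T}}^{T}$ for those of $\Lambda_{T}$; since $N<N_{T}$ we have $N<\infty$ (and if $N_{T}=\infty$ we simply keep $N+1$ of the components $\Lambda_{k}^{T}$ throughout). For $\beta=(\beta_{1},\dots,\beta_{N_{T}})\in\mathbb{T}^{N_{T}}$, set the time-$T$ datum
\[
\varphi^{\beta}(\cdot,T):=\psi(\cdot,T)\sum_{k}e^{i\beta_{k}}\mathbf{1}_{\Lambda_{k}^{T}}.
\]
Exactly as in the Remark following Proposition 1, $\varphi^{\beta}(\cdot,T)\in H^{1}(\mathbb{R}^{d})$ because the $\Lambda_{k}^{T}$ have locally finite perimeter and $\psi(\cdot,T)$ vanishes on their boundaries; moreover $|\varphi^{\beta}(\cdot,T)|^{2}=\varrho(\cdot,T)$, $\mathrm{Im}(\bar{\varphi}^{\beta}\nabla\varphi^{\beta})(\cdot,T)=J(\cdot,T)$, and (again by that Remark, since $|\nabla\varphi^{\beta}(\cdot,T)|=|\nabla\psi(\cdot,T)|$ a.e.) $\varphi^{\beta}$ has the same finite energy as $\psi$. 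Solving the Schr\"odinger equation forward and backward from $t=T$ yields a bounded-energy solution $\varphi^{\beta}\in C(\mathbb{R}_{t};L^{2})\cap L^{\infty}(\mathbb{R}_{t};H^{1})$ and hence, by \cite{gasser1997quantum}, a QHD solution-curve $S_{\beta}=\{(\sigma_{\beta}(t),I_{\beta}(t)):t\in\mathbb{R}\}$; by construction $S_{\beta}$ meets $S$ at $t=T$.

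Now I would argue by contradiction: suppose $S$ is trajectory-unique. Since $S_{\beta}$ intersects $S$, the definition of trajectory-uniqueness forces $S_{\beta}=S$ for \emph{every} $\beta$, i.e.\ $\sigma_{\beta}=\varrho$ and $I_{\beta}=J$ for all $t$, a.e.\ in $x$. Proposition 3 then supplies, for each $\alpha\in\{1,\dots,N\}$, a real constant $C_{\alpha}^{\beta}$ --- unique modulo $2\pi$ because $\psi\neq0$ a.e.\ on $\Omega_{\alpha}$ --- with $\varphi^{\beta}=\psi\,e^{iC_{\alpha}^{\beta}}$ a.e.\ on $\Omega_{\alpha}$; this defines a map $G:\mathbb{T}^{N_{T}}\to\mathbb{T}^{N}$, $G(\beta)=(C_{1}^{\beta},\dots,C_{N}^{\beta})$. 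Each connected set $\Lambda_{k}^{T}\times\{T\}$ lies in $\Omega$, hence in a single component $\Omega_{\alpha(k)}$; comparing the two representations of $\varphi^{\beta}(\cdot,T)$ on $\Lambda_{k}^{T}$ (where $\psi(\cdot,T)\neq0$ a.e.) gives $e^{i\beta_{k}}=e^{iC_{\alpha(k)}^{\beta}}$, that is $\beta_{k}=C_{\alpha(k)}^{\beta}$ in $\mathbb{T}$. Consequently the map $P:\mathbb{T}^{N}\to\mathbb{T}^{N_{T}}$, $P(c_{1},\dots,c_{N})=(c_{\alpha(1)},\dots,c_{\alpha(N_{T})})$, satisfies $P\circ G=\mathrm{id}_{\mathbb{T}^{N_{T}}}$, so $G$ is injective.

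Continuity of $G$ I would obtain just as continuity of $F$ in Proposition 2: if $\beta^{(m)}\to\beta^{(0)}$ then $\varphi^{\beta^{(m)}}(\cdot,T)\to\varphi^{\beta^{(0)}}(\cdot,T)$ in $L^{2}(\mathbb{R}^{d})$, and since the Schr\"odinger propagator is unitary this $L^{2}$-distance is the same at every time $t$; integrating over a ball $B$ with $\overline{B}\subseteq\Omega_{\alpha}$ and $\iint_{B}|\psi|^{2}>0$, and using $\varphi^{\beta}=\psi\,e^{iC_{\alpha}^{\beta}}$ on $B$, one finds $e^{iC_{\alpha}^{\beta^{(m)}}}\to e^{iC_{\alpha}^{\beta^{(0)}}}$, i.e.\ $C_{\alpha}^{\beta^{(m)}}\to C_{\alpha}^{\beta^{(0)}}$ in $\mathbb{T}$. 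Thus $G$ is a continuous injection of $\mathbb{T}^{N_{T}}$ into the lower-dimensional torus $\mathbb{T}^{N}$; restricting (when $N_{T}=\infty$) to an $(N+1)$-dimensional sub-torus and passing to local charts produces a continuous injection from an open subset of $\mathbb{R}^{N+1}$ into $\mathbb{R}^{N}$, contradicting the Brouwer invariance of domain theorem exactly as in Proposition 2. Hence $S$ cannot be trajectory-unique. I expect the main obstacle to be the bookkeeping around Proposition 3 --- verifying that $\varphi^{\beta}$ is an admissible (namely $C_{t}L^{2}\cap L^{\infty}_{t}H^{1}$, bounded-energy) Schr\"odinger solution so that Proposition 3 is applicable, that the constants $C_{\alpha}^{\beta}$ are genuinely well defined, and that they vary continuously with $\beta$; the topological step is then routine.
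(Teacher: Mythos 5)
Your proposal is correct and follows essentially the same route as the paper: perturb the phases on the $N_{T}$ components of $\Lambda_{T}$ at $t=T$, use Proposition 3 (and the accompanying Remark) to see that any wave function generating $S$ is determined by $N$ phases on the components of $\Omega$, and obtain a continuous injection $\mathbb{T}^{N_{T}}\to\mathbb{T}^{N}$ contradicting Brouwer's invariance of domain. You merely supply more detail than the paper's sketch --- notably the explicit left inverse $P$ establishing injectivity and the verification that the constants $C_{\alpha}^{\beta}$ depend continuously on $\beta$ --- but the architecture is identical.
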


\begin{proof}
Let $N<N_{T}<\infty$ and assume that $S$ is $generated$ by the
wave-function $\psi=\psi\left(x,t\right)$, i.e., $\varrho=\left|\psi\right|^{2}$,
$J=\mathrm{Im}\left(\bar{\psi}\nabla\psi\right)$ and $\psi$ solves
the Schrödinger equation. 

Note that
\[
\varphi\left(x,t\right)=\psi\left(x,t\right)e^{iC_{\Omega_{\alpha}}},\textrm{ }\left(x,t\right)\in\Omega_{\alpha},
\]
where $\Omega_{\alpha},$ $\alpha=1,\ldots,N$, are the connected
components of $\Omega$, generates the same trajectory $S$ (sufficient
and necessary). Now, for $C_{\Lambda_{T,1}},\ldots,C_{\Lambda_{T,N_{T}}}\in\mathbb{R}$
set
\[
\sigma\left(x,T\right)=\psi\left(x,T\right)e^{iC_{\Lambda_{T,\beta}}},\textrm{ }x\in\Lambda_{T,\beta},
\]
where $\Lambda_{T,1},\ldots,\Lambda_{T,N_{T}}$ are the connected
components of $\Lambda_{T}$. Solve the IVP for the Schrödinger equation
starting from $\sigma\left(\cdot,T\right)$ backwards and forwards.
If all the possible choices of $\sigma\left(T\right)$ gave the same
trajectory $S$, there would have to exist a continuous and injective
map from $\mathbb{R}^{N_{T}}\mapsto\mathbb{R}^{N}$, which is a contradiction
since $N_{T}>N$. The proof extend easily to $N_{T}=\infty$. 
\end{proof}
\begin{proposition}
Let $\psi=\psi\left(x,t\right)$ be a bounded energy solution of the
Schrödinger equation with $\varrho\left(\cdot,t=T_{1}\right)$ and
$\varrho\left(\cdot,t=T_{2}\right)$ continuous on $\mathbb{R}^{d}$
($T_{1}<T_{2}$) and let all connected components of $\left\{ \varrho\left(\cdot,t=T_{1}\right)\neq0\right\} $
and $\left\{ \varrho\left(\cdot,t=T_{2}\right)\neq0\right\} $ be
Caccioppoli sets. If $N_{T_{1}}\neq N_{T_{2}}$, then $\left\{ \left(\varrho,J\right):t\in\left[T_{1},T_{2}\right]\right\} $
(QHD-trajectory) is not trajectory-unique.
\end{proposition}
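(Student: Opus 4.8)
The plan is to run the argument of Proposition 2 (the IVP non-uniqueness result) in whichever time direction the number of nodal components decreases, replacing the IVP-uniqueness hypothesis used there by trajectory-uniqueness, interpreted over the interval $[T_1,T_2]$. Without loss of generality assume $N_{T_1}>N_{T_2}$; the opposite inequality is handled by exchanging the roles of $T_1$ and $T_2$, the Schr\"odinger evolution being reversible in time. I first treat the case $N_{T_1}<\infty$ and argue by contradiction, supposing the QHD-trajectory $S=\{(\varrho,J)(t):t\in[T_1,T_2]\}$ generated by $\psi$ is trajectory-unique.

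First I would set up the admissible perturbations at $t=T_1$. Let $\Lambda_1^{T_1},\dots,\Lambda_{N_{T_1}}^{T_1}$ be the connected components of $\{\varrho(\cdot,T_1)\neq0\}$; continuity of $\varrho(\cdot,T_1)$ makes them open, and as they are Caccioppoli sets the remark following Proposition 1 shows that, for every $\vec{\alpha}=(\alpha_1,\dots,\alpha_{N_{T_1}})\in\mathbb{T}^{N_{T_1}}$, the function
\[
\varphi^{\vec{\alpha}}(\cdot,T_1):=\psi(\cdot,T_1)\sum_{l=1}^{N_{T_1}}e^{i\alpha_l}\mathbf{1}_{\Lambda_l^{T_1}}
\]
lies in $H^1(\mathbb{R}^d)$, has finite (hence, by conservation, bounded) energy, and generates the same pair $(\varrho,J)$ at $t=T_1$ as $\psi$. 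Let $\varphi^{\vec{\alpha}}$ be its Schr\"odinger evolution; by the Gasser--Markowich result the induced $(\sigma^{\vec{\alpha}},I^{\vec{\alpha}})$ is a QHD solution-curve $S^{\vec{\alpha}}$ of the class appearing in the Definition. Since $S^{\vec{\alpha}}$ meets $S$ at $t=T_1$, trajectory-uniqueness forces $S^{\vec{\alpha}}=S$ throughout $[T_1,T_2]$; in particular $\varphi^{\vec{\alpha}}(\cdot,T_2)$ generates $(\varrho,J)(T_2)$. Continuity of $\varrho(\cdot,T_2)$ makes the components $\Lambda_1^{T_2},\dots,\Lambda_{N_{T_2}}^{T_2}$ of $\{\varrho(\cdot,T_2)\neq0\}$ open, so Proposition 1 applied at time $T_2$ yields reals $\beta_k=\beta_k(\vec{\alpha})$ with $\varphi^{\vec{\alpha}}(\cdot,T_2)=\psi(\cdot,T_2)e^{i\beta_k}$ a.e.\ on $\Lambda_k^{T_2}$, uniquely determined mod $2\pi$ because $\varrho(\cdot,T_2)>0$ there. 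This defines a map
\[
F:\mathbb{T}^{N_{T_1}}\longrightarrow\mathbb{T}^{N_{T_2}},\qquad\vec{\alpha}\longmapsto(\beta_1,\dots,\beta_{N_{T_2}}).
\]

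Next I would verify, exactly as in the proof of Proposition 2, that $F$ is continuous and injective. Continuity follows from the $L^2(\mathbb{R}^d)$-continuity of $\vec{\alpha}\mapsto\varphi^{\vec{\alpha}}(\cdot,T_1)$ (the elementary estimate used there), the fact that the Schr\"odinger propagator from $T_1$ to $T_2$ is an $L^2$-isometry, and the positivity of $\int_{\Lambda_k^{T_2}}|\psi(\cdot,T_2)|^2$, which lets one read off each $\beta_k$ continuously from $\varphi^{\vec{\alpha}}(\cdot,T_2)\in L^2$. For injectivity, if $F(\vec{\alpha})=F(\vec{\alpha}')$ then $\varphi^{\vec{\alpha}}(\cdot,T_2)$ and $\varphi^{\vec{\alpha}'}(\cdot,T_2)$ agree on every $\Lambda_k^{T_2}$ and both vanish off $\bigcup_k\Lambda_k^{T_2}$ (since $\sigma=\varrho$ there), so they coincide in $L^2(\mathbb{R}^d)$; solving the Schr\"odinger equation backward from $T_2$ to $T_1$, where uniqueness holds, gives $\varphi^{\vec{\alpha}}(\cdot,T_1)=\varphi^{\vec{\alpha}'}(\cdot,T_1)$, hence $e^{i\alpha_l}=e^{i\alpha'_l}$ on a positive-measure subset of each $\Lambda_l^{T_1}$, and therefore $\vec{\alpha}=\vec{\alpha}'$ in $\mathbb{T}^{N_{T_1}}$. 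But a continuous injection $\mathbb{T}^{N_{T_1}}\to\mathbb{T}^{N_{T_2}}$ is, locally, a continuous injection $\mathbb{R}^{N_{T_1}}\to\mathbb{R}^{N_{T_2}}$, which Brouwer's invariance of domain rules out when $N_{T_1}>N_{T_2}$; this contradiction shows $S$ is not trajectory-unique.

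Finally I would dispose of the remaining cases. If $N_{T_1}=\infty$ (so necessarily $N_{T_2}<\infty$), I repeat the construction varying only the phases on $N_{T_2}+1$ chosen components of $\{\varrho(\cdot,T_1)\neq0\}$, keeping those of $\psi$ on the rest; this yields a continuous injection $\mathbb{T}^{N_{T_2}+1}\to\mathbb{T}^{N_{T_2}}$ and the same contradiction. The case $N_{T_1}<N_{T_2}$ reduces to the foregoing by interchanging $T_1$ and $T_2$ (perturbing at $T_2$ and evolving backward). I expect the only point needing care to be the well-definedness and continuity of $F$: the phases are determined only modulo $2\pi$ and are read off on each component separately, so one must check that no monodromy is concealed in passing from the local phase identities of Proposition 1 to the global torus-valued map; the rest is a faithful transcription of the earlier arguments.
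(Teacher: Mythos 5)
Your proposal is correct and follows essentially the same route as the paper: the paper's own proof simply invokes Proposition 2 for the case $N_{T_1}>N_{T_2}$ and time-reversal for the opposite case, and your argument is a faithful, fully written-out transcription of exactly that reduction (perturbing phases componentwise at the time with more nodal domains, using trajectory-uniqueness to define the torus map $F$, and contradicting Brouwer's invariance of domain). The extra care you take with well-definedness of $F$ modulo $2\pi$ and with the $N_{T_1}=\infty$ case is consistent with what the paper does in Proposition 2 and introduces no new issues.
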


\begin{proof}
Already established for $N_{T_{1}}>N_{T_{2}}$ (Proposition 2). If
$N_{T_{2}}>N_{T_{1}}$ apply the same argument calculating backwards
in time.
\end{proof}
Now let $\Omega_{1}\neq\Omega_{2}$ be two adjacent connected components
of $\Omega\subseteq\mathbb{R}_{x}^{d}\times\mathbb{R}_{t}$, with
the smooth interface surface $\Gamma=\partial\Omega_{1}\cap\partial\Omega_{2}$.

Let $\psi=\psi\left(x,t\right)$ be a smooth solution of the Schrödinger
equation, with obviously $\left.\psi\right|_{\Gamma}=0$. Denote by
$\varUpsilon$ the unit normal of $\Gamma$, pointing (for definiteness
sake) into $\Omega_{1}$ and set $\varUpsilon=\left(\begin{array}{c}
\varUpsilon_{x}\\
\varUpsilon_{t}
\end{array}\right)$ according to the coordinate ordering $\left(\begin{array}{c}
x\\
t
\end{array}\right)$. 
\begin{proposition}
Let $C_{1}\neq C_{2}$ be real constants and set
\[
\varphi\left(x,t\right)=\psi\left(x,t\right)\cdot\left\{ \begin{array}{cc}
e^{iC_{1}}, & \left(x,t\right)\in\Omega_{1},\\
e^{iC_{2}}, & \left(x,t\right)\in\Omega_{2}.
\end{array}\right.
\]
Then $\varphi$ is a solution of the Schrödinger equation in $\Omega_{1}\cup\Gamma\cup\Omega_{2}$
if and only if
\[
\left.\nabla\psi\cdot\varUpsilon_{x}\right|_{\Gamma}=0.
\]
\end{proposition}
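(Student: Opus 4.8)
The plan is to compute directly what it means for $\varphi$ to solve the Schr\"odinger equation across the interface $\Gamma$, treating $\varphi$ as a distribution on $\Omega_1\cup\Gamma\cup\Omega_2$ and computing its derivatives in the sense of distributions. Away from $\Gamma$, i.e. in $\Omega_1$ and in $\Omega_2$ separately, $\varphi$ is just $\psi$ multiplied by a constant unimodular factor, so it solves the equation there trivially because $\psi$ does and the equation is linear. Hence the only obstruction can come from the distributional derivatives picking up singular contributions supported on $\Gamma$. So the real content is: $\varphi$ solves the Schr\"odinger equation globally on $\Omega_1\cup\Gamma\cup\Omega_2$ if and only if those jump terms vanish.

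First I would write $\varphi = \psi\,g$ where $g = e^{iC_1}\mathbf{1}_{\Omega_1} + e^{iC_2}\mathbf{1}_{\Omega_2}$, a piecewise-constant function with a jump of size $e^{iC_1}-e^{iC_2}\neq 0$ across $\Gamma$. Since $\psi$ is smooth and vanishes on $\Gamma$, the product rule in the distributional sense gives, for any first-order derivative $\partial$ (either $\partial_{x_j}$ or $\partial_t$),
\[
\partial(\psi g) = g\,\partial\psi + \psi\,\partial g = g\,\partial\psi,
\]
because $\partial g$ is a measure supported on $\Gamma$ (proportional to the component of $\varUpsilon$ in the $\partial$-direction times surface measure on $\Gamma$) and $\psi|_\Gamma = 0$ kills it. This is exactly the computation already used in the Remark after Proposition 1 to show $\nabla\varphi = \nabla\psi\cdot g$. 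So far everything is as one would want, and in particular $\varphi_t = g\,\psi_t$ and $\nabla_x\varphi = g\,\nabla_x\psi$ as distributions on all of $\Omega_1\cup\Gamma\cup\Omega_2$.

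The issue arises at second order, in the Laplacian term $\Delta\varphi = \operatorname{div}_x(\nabla_x\varphi) = \operatorname{div}_x(g\,\nabla_x\psi)$. Applying the distributional product rule again,
\[
\operatorname{div}_x(g\,\nabla_x\psi) = g\,\Delta\psi + \nabla_x g\cdot\nabla_x\psi,
\]
and now $\nabla_x g\cdot\nabla_x\psi$ need not vanish: $\nabla_x g = (e^{iC_2}-e^{iC_1})\,\varUpsilon_x\,\delta_\Gamma$ is a measure on $\Gamma$, and $\nabla_x\psi$ restricted to $\Gamma$ is generally nonzero even though $\psi$ itself is zero there. (One must be slightly careful: $\nabla_x\psi\cdot\nabla_x g$ is a product of an $L^\infty_{loc}$ function with a measure supported on a smooth hypersurface, which is well-defined because $\nabla_x\psi$ is continuous; this is where smoothness of $\psi$ and of $\Gamma$ is used.) Thus the singular part of $\Delta\varphi$ on $\Gamma$ equals $(e^{iC_2}-e^{iC_1})(\nabla_x\psi\cdot\varUpsilon_x)|_\Gamma\,\delta_\Gamma$. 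Plugging $\varphi = g\psi$ into $i\varphi_t + \tfrac12\Delta\varphi - V\varphi$, the smooth parts cancel by the equation for $\psi$, and what remains is precisely $\tfrac12(e^{iC_2}-e^{iC_1})(\nabla_x\psi\cdot\varUpsilon_x)|_\Gamma\,\delta_\Gamma$. Since $e^{iC_1}\neq e^{iC_2}$, this vanishes if and only if $(\nabla\psi\cdot\varUpsilon_x)|_\Gamma = 0$, which is the claim. I would close by noting there is no singular contribution from the $\varphi_t$ term at second order (it is only first order in $t$) and no contribution from $V\varphi$ (zeroth order, and $\psi|_\Gamma=0$ anyway), so $\Gamma$ is the only place where a defect can occur and the Laplacian is the only term producing one.

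The main obstacle, and the step needing the most care, is making rigorous the claim that $\nabla_x\psi\cdot\nabla_x g$ is a well-defined distribution equal to $(\nabla\psi\cdot\varUpsilon_x)|_\Gamma$ times surface measure — i.e. justifying multiplication of the measure $\nabla_x g$ by the function $\nabla_x\psi$ and identifying its restriction to $\Gamma$. This is legitimate here because $\psi$ is assumed smooth (so $\nabla_x\psi$ is continuous and its trace on the smooth hypersurface $\Gamma$ is classical) and $\Gamma$ is assumed to be a smooth interface of locally finite perimeter, so $\nabla_x g$ is $(e^{iC_2}-e^{iC_1})\varUpsilon_x\,\mathcal{H}^{d}\!\restriction_\Gamma$ by the structure theorem for sets of finite perimeter; testing against $\omega\in C_0^\infty(\Omega_1\cup\Gamma\cup\Omega_2)$ and integrating by parts reduces everything to a surface integral over $\Gamma$ of $\omega\,(\nabla_x\psi\cdot\varUpsilon_x)$, from which the stated equivalence is immediate.
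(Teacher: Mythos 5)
Your argument is correct and is essentially the paper's own proof: both write $\varphi$ as $\psi$ times a piecewise-constant unimodular factor, use $\psi|_{\Gamma}=0$ to kill the first-order jump terms in $\varphi_t$ and $\nabla\varphi$, and identify the sole surviving singular contribution in $\Delta\varphi$ as $(e^{iC_1}-e^{iC_2})(\nabla\psi\cdot\varUpsilon_x)|_{\Gamma}$ times surface measure by testing against $\omega\in C_0^\infty$ and integrating by parts. The only cosmetic difference is that you phrase the interface term via the structure theorem for $\nabla_x g$ as a measure, while the paper reaches the same surface integral directly through the divergence theorem on $\Omega_1$ and $\Omega_2$.
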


\begin{proof}
Set $\alpha_{1}=e^{iC_{1}}$, $\alpha_{2}=e^{iC_{2}}$. Then
\[
\varphi=\alpha_{1}\psi\mathbf{1}_{\Omega_{1}}+\alpha_{2}\psi\mathbf{1}_{\Omega_{2}}\textrm{ in }\Omega_{1}\cup\Gamma\cup\Omega_{2}.
\]
Compute, in $\Omega_{1}\cup\Gamma\cup\Omega_{2}$
\[
\varphi_{t}=\alpha_{1}\psi_{t}\mathbf{1}_{\Omega_{1}}+\alpha_{2}\psi_{t}\mathbf{1}_{\Omega_{2}}+\alpha_{1}\psi\frac{\partial}{\partial t}\mathbf{1}_{\Omega_{1}}+\alpha_{2}\psi\frac{\partial}{\partial t}\mathbf{1}_{\Omega_{2}}.
\]
Note that $\left(\frac{\partial}{\partial t}\mathbf{1}_{\Omega_{i}}\right)$
is supported on $\partial\Omega_{i}$, where $\psi$ vanishes continuously.
Thus
\[
\psi\frac{\partial}{\partial t}\mathbf{1}_{\Omega_{i}}=0
\]
and
\[
\varphi_{t}=\alpha_{1}\psi_{t}\mathbf{1}_{\Omega_{1}}+\alpha_{2}\psi_{t}\mathbf{1}_{\Omega_{2}}.
\]
The analogous computation holds with $\nabla\mathbf{1}_{\Omega_{i}}$.
Then
\[
\nabla\varphi=\alpha_{1}\nabla\psi\mathbf{1}_{\Omega_{1}}+\alpha_{2}\nabla\psi\mathbf{1}_{\Omega_{2}}
\]
and
\[
\begin{array}{cl}
\Delta\varphi= & \alpha_{1}\Delta\psi\mathbf{1}_{\Omega_{1}}+\alpha_{2}\Delta\psi\mathbf{1}_{\Omega_{2}}\\
 & +\alpha_{1}\nabla\psi\cdot\nabla\mathbf{1}_{\Omega_{1}}+\alpha_{2}\nabla\psi\cdot\nabla\mathbf{1}_{\Omega_{2}}.
\end{array}
\]
Plug into the Schrödinger equation to find (since $\psi$ solves the
Schrödinger equation)
\[
0=\nabla\psi\cdot\left(\alpha_{1}\nabla\mathbf{1}_{\Omega_{1}}+\alpha_{2}\nabla\mathbf{1}_{\Omega_{2}}\right).
\]
Take a test-function, $\omega=\omega\left(x,t\right)\in C_{0}^{\infty}\left(\mathbb{R}_{x}^{d}\times\mathbb{R}_{t}\right)$:
\[
\begin{array}{cl}
0 & =\left\langle \alpha_{1}\nabla\psi\cdot\nabla\mathbf{1}_{\Omega_{1}}+\alpha_{2}\nabla\psi\cdot\nabla\mathbf{1}_{\Omega_{2}},\omega\right\rangle _{x,t}\\
 & =-\alpha_{1}\int_{\Omega_{1}}\mathrm{div}_{x}\left(\omega\nabla\psi\right)dxdt-\alpha_{2}\int_{\Omega_{2}}\mathrm{div}_{x}\left(\omega\nabla\psi\right)dxdt\\
 & =\alpha_{1}\int_{\Gamma}\omega\nabla\psi\cdot\varUpsilon_{x}ds-\alpha_{2}\int_{\Gamma}\omega\nabla\psi\cdot\varUpsilon_{x}ds\\
 & =\left(\alpha_{1}-\alpha_{2}\right)\int_{\Gamma}\nabla\psi\cdot\varUpsilon_{x}\omega ds.
\end{array}
\]
Therefore,
\[
\left.\nabla\psi\cdot\varUpsilon_{x}\right|_{\Gamma}=0.
\]
\end{proof}
For the following results, we use the generalized Green's formula
by De Giorgi-Federer:
\[
\int_{E}\mathrm{div}Fdx=\int_{\partial^{*}E}F\cdot\varUpsilon d\mathcal{H}^{n-1},
\]
where $F$ is any locally Lipschitz function, $E\subseteq\mathbb{R}^{n}$
is of finite perimeter, $\varUpsilon$ is the normal of $\partial E$,
and $\partial^{*}E$ is the reduced boundary (see, e.g., \cite{evans2015measure,maggi2012sets}).
\begin{proposition}
Let $\Omega_{1},\Omega_{2}\subseteq\mathbb{R}_{x}^{d}\times\mathbb{R}_{t}$
be of locally finite perimeter. Let $\nabla\psi\in C_{loc}^{0,1}\left(\mathbb{R}_{x}^{d}\times\mathbb{R}_{t}\right)$.
Then $\varphi$ (defined as in Proposition 6) solves the Schrödinger
equation if and only if
\[
\nabla\psi\cdot\varUpsilon_{x}=0\textrm{ }\mathcal{H}^{d}-a.e.\textrm{ }on\textrm{ }\left(\partial\Omega_{1}\cap\partial\Omega_{2}\right)^{*}.
\]
\end{proposition}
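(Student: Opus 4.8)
The plan is to rerun the proof of Proposition 6 at this lower regularity, replacing the classical divergence theorem on the smooth interface by the De Giorgi--Federer formula on the reduced boundaries $\partial^{*}\Omega_{1}$ and $\partial^{*}\Omega_{2}$. First I would set $\alpha_{1}=e^{iC_{1}}$, $\alpha_{2}=e^{iC_{2}}$, so that $\varphi=\alpha_{1}\psi\mathbf{1}_{\Omega_{1}}+\alpha_{2}\psi\mathbf{1}_{\Omega_{2}}$ on $\Omega_{1}\cup\Gamma\cup\Omega_{2}$ (with $\Omega_{1},\Omega_{2}$ the adjacent connected components of $\Omega$ and $\Gamma=\partial\Omega_{1}\cap\partial\Omega_{2}$, as in Proposition 6). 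Since $\nabla\psi\in C^{0,1}_{loc}$, also $\Delta_{x}\psi\in L^{\infty}_{loc}$ and hence $\psi_{t}\in L^{\infty}_{loc}$ by the equation, so $\psi$ is continuous on $\mathbb{R}^{d}_{x}\times\mathbb{R}_{t}$, and because $\partial\Omega_{i}\subseteq\{\varrho=0\}$ it vanishes on $\partial\Omega_{1}\cup\partial\Omega_{2}$. As each $\Omega_{i}$ has locally finite perimeter, $\nabla_{x,t}\mathbf{1}_{\Omega_{i}}$ is an $\mathbb{R}^{d+1}$--valued Radon measure concentrated on $\partial^{*}\Omega_{i}\subseteq\partial\Omega_{i}$, so the continuous factor $\psi$, which vanishes there, kills it: $\psi\,\nabla_{x,t}\mathbf{1}_{\Omega_{i}}=0$. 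The distributional Leibniz rule then gives, in $\Omega_{1}\cup\Gamma\cup\Omega_{2}$,
\[
\varphi_{t}=\alpha_{1}\psi_{t}\mathbf{1}_{\Omega_{1}}+\alpha_{2}\psi_{t}\mathbf{1}_{\Omega_{2}},\qquad\nabla_{x}\varphi=\alpha_{1}\nabla_{x}\psi\,\mathbf{1}_{\Omega_{1}}+\alpha_{2}\nabla_{x}\psi\,\mathbf{1}_{\Omega_{2}},
\]
and hence
\[
\Delta_{x}\varphi=\alpha_{1}\Delta_{x}\psi\,\mathbf{1}_{\Omega_{1}}+\alpha_{2}\Delta_{x}\psi\,\mathbf{1}_{\Omega_{2}}+\alpha_{1}\nabla_{x}\psi\cdot\nabla_{x}\mathbf{1}_{\Omega_{1}}+\alpha_{2}\nabla_{x}\psi\cdot\nabla_{x}\mathbf{1}_{\Omega_{2}},
\]
the last two terms being scalar Radon measures precisely because $\nabla_{x}\psi$ is locally Lipschitz.

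Next I would substitute into the Schr\"odinger operator. Because $\psi$ solves the equation and $\varphi=\alpha_{i}\psi$ on the open set $\Omega_{i}$, the ``bulk'' contributions cancel, leaving the singular measure
\[
i\varphi_{t}+\tfrac{1}{2}\Delta_{x}\varphi-V\varphi=\tfrac{1}{2}\bigl(\alpha_{1}\,\nabla_{x}\psi\cdot\nabla_{x}\mathbf{1}_{\Omega_{1}}+\alpha_{2}\,\nabla_{x}\psi\cdot\nabla_{x}\mathbf{1}_{\Omega_{2}}\bigr)\qquad\text{in }\Omega_{1}\cup\Gamma\cup\Omega_{2}.
\]
Pairing with $\omega\in C^{\infty}_{0}$ supported in (the interior of) $\Omega_{1}\cup\Gamma\cup\Omega_{2}$ and applying the De Giorgi--Federer formula to the compactly supported, locally Lipschitz field $\bigl(\omega\nabla_{x}\psi,\,0\bigr)$ on $\mathbb{R}^{d+1}$ (whose space-time divergence is $\mathrm{div}_{x}(\omega\nabla_{x}\psi)$ and whose trace on $\partial^{*}\Omega_{i}$ is $\omega\,\nabla_{x}\psi\cdot(\nu_{\Omega_{i}})_{x}$, with $\nu_{\Omega_{i}}$ the measure-theoretic outer unit normal), and noting that on $\mathrm{supp}\,\omega$ each $\partial^{*}\Omega_{i}$ reduces to the common interface portion $\Gamma^{*}:=(\partial\Omega_{1}\cap\partial\Omega_{2})^{*}$, one obtains
\[
\bigl\langle i\varphi_{t}+\tfrac{1}{2}\Delta_{x}\varphi-V\varphi,\,\omega\bigr\rangle=-\tfrac{1}{2}\int_{\Gamma^{*}}\omega\,\bigl(\alpha_{1}\,\nabla_{x}\psi\cdot(\nu_{\Omega_{1}})_{x}+\alpha_{2}\,\nabla_{x}\psi\cdot(\nu_{\Omega_{2}})_{x}\bigr)\,d\mathcal{H}^{d}.
\]
The standard blow-up fact for two disjoint sets of finite perimeter gives $\nu_{\Omega_{1}}=-\nu_{\Omega_{2}}=-\varUpsilon$ $\mathcal{H}^{d}$--a.e.\ on $\Gamma^{*}$ (with $\varUpsilon$ the interface normal pointing into $\Omega_{1}$), so the right-hand side equals $\tfrac{1}{2}(\alpha_{1}-\alpha_{2})\int_{\Gamma^{*}}\omega\,\nabla_{x}\psi\cdot\varUpsilon_{x}\,d\mathcal{H}^{d}$. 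Since $\alpha_{1}\neq\alpha_{2}$ (equivalently $C_{1}\neq C_{2}$), this vanishes for all admissible $\omega$ if and only if $\nabla_{x}\psi\cdot\varUpsilon_{x}=0$ $\mathcal{H}^{d}$--a.e.\ on $(\partial\Omega_{1}\cap\partial\Omega_{2})^{*}$, which gives both implications at once.

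The distributional Leibniz computations and the vanishing of $\psi\,\nabla\mathbf{1}_{\Omega_{i}}$ are routine. Two points carry the real content. First, $\nabla\psi\in C^{0,1}_{loc}$ is exactly the hypothesis making $\omega\nabla_{x}\psi$ an admissible (compactly supported, locally Lipschitz) field, so that the De Giorgi--Federer boundary integrals against $\mathcal{H}^{d}$ on $\partial^{*}\Omega_{i}$ are well defined and the singular part above genuinely lives on an $\mathcal{H}^{d}$--rectifiable set. Second, one needs the orientation identity $\nu_{\Omega_{1}}=-\nu_{\Omega_{2}}$ $\mathcal{H}^{d}$--a.e.\ on the common reduced boundary (a blow-up argument: at a common reduced-boundary point the rescalings of $\Omega_{1}$ and $\Omega_{2}$ converge to half-spaces whose intersection has density zero, hence to complementary half-spaces), together with pinning down the precise meaning of $(\partial\Omega_{1}\cap\partial\Omega_{2})^{*}$ and checking that an $\omega$ localized near $\Gamma$ does not pick up the portions of $\partial^{*}\Omega_{1}$ or $\partial^{*}\Omega_{2}$ lying off the common interface (in particular ruling out, up to $\mathcal{H}^{d}$--null sets, points of $\partial^{*}\Omega_{1}$ at which $\Omega_{2}$ has density zero). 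I expect this second point to be the main obstacle, as it is the only place where nontrivial geometric measure theory enters; everything else is a low-regularity transcription of the proof of Proposition 6.
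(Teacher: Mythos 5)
Your proposal is correct and is exactly the argument the paper intends: the paper states this proposition without proof, immediately after introducing the De Giorgi--Federer formula, and the intended argument is precisely your rerun of the proof of Proposition 6 with the classical divergence theorem replaced by the generalized Green's formula on the reduced boundaries (together with the orientation identity $\nu_{\Omega_{1}}=-\nu_{\Omega_{2}}$ $\mathcal{H}^{d}$--a.e.\ on the common reduced boundary). The geometric-measure-theoretic points you flag at the end are real but standard, and your writeup actually supplies details the paper omits entirely.
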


The consequence is:
\begin{proposition}
Let all connected components $\Omega_{\alpha}$ of $\Omega$ ($\alpha\in B$)
be of locally finite perimeter and let
\[
\nabla\psi\in C_{loc}^{0,1}\left(\mathbb{R}_{x}^{d}\times\mathbb{R}_{t}\right).
\]
Assume that there is a non-space-like interface segment, $\Gamma$,
between two connected components such that
\[
\nabla\psi\cdot\varUpsilon_{x}\neq0\textrm{ }\mathcal{H}^{d}-a.e.\textrm{ }on\textrm{ }\Gamma^{*}.
\]
Then the QHD-trajectory generated by $\psi=\psi\left(x,t\right)$
is not trajectory-unique on $\mathbb{R}_{t}$.
\end{proposition}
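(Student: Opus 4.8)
The plan is to construct a second bounded-energy, Schr\"{o}dinger-generated QHD-trajectory $S'$ that passes through $\left(\varrho,J\right)\left(T\right)$ for one carefully chosen time $T$ but does not coincide with $S$ for all $t$; by the definition of trajectory-uniqueness this is exactly what must be shown. Write $\Omega_{1},\Omega_{2}$ for the two connected components of $\Omega$ meeting along $\Gamma$, let $\psi$ be the wave function generating $S$, and let $\Omega_{\alpha}$, $\alpha\in B$, denote the connected components of $\Omega$.

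The first step -- and the one I expect to be the main obstacle -- is to extract from the \emph{non}-space-like hypothesis on $\Gamma$ a time $T$ at which \emph{both} $\Omega_{1}$ and $\Omega_{2}$ contain points. Since $\nabla\psi\cdot\varUpsilon_{x}\neq0$ holds $\mathcal{H}^{d}$-a.e.\ on $\Gamma^{*}$, in particular $\varUpsilon_{x}\neq0$ there, so $\Gamma$ cannot be contained in a single time-slice; fix $\left(x_{0},T\right)\in\Gamma^{*}$ with $\varUpsilon_{x}\left(x_{0},T\right)\neq0$ and $\nabla\psi\left(x_{0},T\right)\cdot\varUpsilon_{x}\left(x_{0},T\right)\neq0$. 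Because $\nabla\psi\in C_{loc}^{0,1}$ and $\psi\left(x_{0},T\right)=0$, a first-order expansion in the spatial variable gives $\psi\left(x_{0}\pm s\,\varUpsilon_{x}\left(x_{0},T\right),T\right)\neq0$ for all small $s>0$, so both points lie in $\Omega$; since near $\left(x_{0},T\right)$ the set $\Omega$ is the disjoint union of $\Omega_{1}$ and $\Omega_{2}$ and $\varUpsilon$ is the interior normal of $\Omega_{1}$ along $\Gamma^{*}$, a density (blow-up) argument at the reduced-boundary point $\left(x_{0},T\right)$ places one of these points in $\Omega_{1}$ and the other in $\Omega_{2}$. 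As a connected component of the spatial slice $\left\{ \varrho\left(\cdot,T\right)\neq0\right\} $ lies inside a single connected component of $\Omega$, this yields two distinct spatial nodal domains $\Lambda_{T,\beta_{1}},\Lambda_{T,\beta_{2}}$ with $\Lambda_{T,\beta_{1}}\times\left\{ T\right\} \subseteq\Omega_{1}$ and $\Lambda_{T,\beta_{2}}\times\left\{ T\right\} \subseteq\Omega_{2}$. Choosing $T$ generic, one may moreover assume that all spatial nodal domains at time $T$ are Caccioppoli sets, which is true for a.e.\ $T$ by slicing the finite-perimeter sets $\Omega_{\alpha}$.

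Then I would build the competitor. By Proposition~1 and the Remark following it, the datum $\varphi\left(\cdot,T\right):=\psi\left(\cdot,T\right)\left(1-2\,\mathbf{1}_{\Lambda_{T,\beta_{2}}}\right)$ belongs to $H^{1}\left(\mathbb{R}^{d}\right)$, satisfies $\left|\varphi\left(\cdot,T\right)\right|^{2}=\varrho\left(\cdot,T\right)$ and $\mathrm{Im}\left(\bar{\varphi}\nabla\varphi\right)\left(\cdot,T\right)=J\left(\cdot,T\right)$, and has finite energy equal to that of $\psi\left(\cdot,T\right)$ -- the distributional gradient of $\mathbf{1}_{\Lambda_{T,\beta_{2}}}$ is carried by $\partial\Lambda_{T,\beta_{2}}\subseteq\left\{ \varrho\left(\cdot,T\right)=0\right\} $, where $\psi$ vanishes continuously, so $\psi\left(\cdot,T\right)\nabla\mathbf{1}_{\Lambda_{T,\beta_{2}}}=0$ and $\left|\nabla\varphi\left(\cdot,T\right)\right|=\left|\nabla\psi\left(\cdot,T\right)\right|$ a.e. Solving the Schr\"{o}dinger equation forwards and backwards from $\varphi\left(\cdot,T\right)$ produces a bounded-energy solution $\varphi=\varphi\left(x,t\right)$, whose QHD-trajectory $S'$ meets $S$ at $t=T$ by construction.

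It remains to show $S'\neq S$. Suppose not; then $\varrho=\left|\varphi\right|^{2}$ and $J=\mathrm{Im}\left(\bar{\varphi}\nabla\varphi\right)$ for all $t$, so Proposition~3 provides constants $D_{\alpha}\in\mathbb{R}$ with $\varphi=\psi e^{iD_{\alpha}}$ a.e.\ in each $\Omega_{\alpha}$. Evaluating at $t=T$ -- legitimate after an $L^{2}$-in-$t$ continuity argument, both $\psi$ and $\varphi$ lying in $C\left(\mathbb{R}_{t};L^{2}\right)$ -- and comparing with the definition of $\varphi\left(\cdot,T\right)$ on $\Lambda_{T,\beta_{1}}\subseteq\Omega_{1}$ and on $\Lambda_{T,\beta_{2}}\subseteq\Omega_{2}$ forces $e^{iD_{1}}=1$ and $e^{iD_{2}}=-1$, so $\varphi$ carries different phases on $\Omega_{1}$ and $\Omega_{2}$. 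But in any spacetime ball meeting only $\Omega_{1}$ and $\Omega_{2}$, $\varphi$ then has the form $\psi\left(\alpha_{1}\mathbf{1}_{\Omega_{1}}+\alpha_{2}\mathbf{1}_{\Omega_{2}}\right)$ of Proposition~7 with $\alpha_{1}=e^{iD_{1}}\neq e^{iD_{2}}=\alpha_{2}$; since $\varphi$ solves the Schr\"{o}dinger equation and $\nabla\psi\in C_{loc}^{0,1}$, Proposition~7 (used locally and summed over a cover of $\Gamma^{*}$) forces $\nabla\psi\cdot\varUpsilon_{x}=0$ $\mathcal{H}^{d}$-a.e.\ on $\left(\partial\Omega_{1}\cap\partial\Omega_{2}\right)^{*}$, in particular $\mathcal{H}^{d}$-a.e.\ on $\Gamma^{*}$, contradicting the hypothesis. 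Hence $S'\neq S$; it intersects $S$ at $t=T$ but not for all $t$, so $S$ is not trajectory-unique on $\mathbb{R}_{t}$. The argument is unaffected by whether $B$ is finite or infinite, and, apart from the slicing step above, the only remaining bookkeeping -- the local-to-global use of Proposition~7 near $\Gamma$ and the passage from ``a.e.\ in $\Omega_{\alpha}$'' to ``a.e.\ on $\left\{ t=T\right\} $'' -- is routine.
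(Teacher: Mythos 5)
Your proposal is correct and follows essentially the same route as the paper: phase--shift the datum at a time $T$ where $\Gamma$ meets the slice $\{t=T\}$, evolve by the Schr\"odinger flow, and derive a contradiction from Proposition~7 via the hypothesis $\nabla\psi\cdot\varUpsilon_{x}\neq0$ on $\Gamma^{*}$. You actually supply more detail than the paper does (the slicing/blow-up argument locating nodal domains on both sides of $\Gamma$ at time $T$, and the explicit appeal to Proposition~3 to reduce a coinciding trajectory to constant phases $e^{iD_{\alpha}}$ on each $\Omega_{\alpha}$), steps the paper leaves implicit in the phrase ``By the above \dots and hence trajectory non-uniqueness follows.''
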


\begin{proof}
Choose $T\in\mathbb{R}$ such that
\[
\Gamma\cap\left\{ \left(y,T\right):y\in\mathbb{R}^{d}\right\} \neq\emptyset.
\]
Choose $C_{1}\neq C_{2}$ real numbers and set
\[
\varphi\left(x,T\right):=\left\{ \begin{array}{l}
\psi\left(x,T\right)e^{iC_{1}}\textrm{ }in\textrm{ }\Omega_{1}\cap\left\{ t=T\right\} ,\\
\psi\left(x,T\right)e^{iC_{2}}\textrm{ }in\textrm{ }\Omega_{2}\cap\left\{ t=T\right\} ,\\
\psi\left(x,T\right)\textrm{ }elsewhere,
\end{array}\right.
\]
and solve the Schrödinger IVP with $\varphi\left(\cdot,T\right)$
as initial datum. By the above
\[
\varphi\left(x,t\right)\neq\psi\left(x,t\right)\cdot\left\{ \begin{array}{cc}
e^{iC_{1}}, & \left(x,t\right)\in\Omega_{1},\\
e^{iC_{2}}, & \left(x,t\right)\in\Omega_{2},
\end{array}\right.
\]
and hence trajectory non-uniqueness follows.
\end{proof}

\section*{Acknowledgements}

The authors acknowledge in-depth discussions with Paolo Antonelli
and Pierangelo Marcati on the mathematical analysis of the QHD system.


\medskip
Received xxxx 20xx; revised xxxx 20xx.
\medskip

\end{document}